\documentclass[11pt]{amsart}

\usepackage[margin=1.05in]{geometry}
\usepackage{amsmath,amssymb,amsthm,mathtools}
\usepackage{mathrsfs}
\usepackage{enumitem}
\usepackage{microtype}
\usepackage[colorlinks=true,linkcolor=blue,citecolor=blue,urlcolor=blue]{hyperref}

\numberwithin{equation}{section}

\newtheorem{theorem}{Theorem}[section]
\newtheorem{conjecture}[theorem]{Conjecture}
\newtheorem{proposition}[theorem]{Proposition}
\newtheorem{lemma}[theorem]{Lemma}
\newtheorem{corollary}[theorem]{Corollary}
\newtheorem{question}[theorem]{Question}
\newtheorem*{bryanttheorem}{Bryant's theorem}
\newtheorem*{Lutheorem}{Lu's first pinching theorem}
\newtheorem*{DGLYtheorem}{Ding--Ge--Li--Yang's theorem}
\newtheorem*{GLZtheorem}{Ge--Li--Zhang's theorem}
\theoremstyle{remark}
\newtheorem{remark}[theorem]{Remark}
\newtheorem*{acknow}{Acknowledgments}

\newcommand{\R}{\mathbb R}
\newcommand{\Z}{\mathbb Z}
\newcommand{\Q}{\mathbb Q}
\newcommand{\C}{\mathbb C}
\newcommand{\Sph}{\mathbb S}
\newcommand{\ip}[2]{\left\langle #1,#2\right\rangle}

\newcommand{\Area}{\operatorname{Area}}

\newcommand{\Tr}{\operatorname{Tr}}
\newcommand{\Conv}{\operatorname{Conv}}

\title[Flat minimal tori and Lu's second-gap conjecture]{Flat minimal tori and Lu's second-gap conjecture}
\author[F. G. Li]{Fagui Li}
\address{Frontier Interdisciplinary Domain, Beijing Institute of Technology, Zhuhai, Guangdong 519088, P. R. China}
\email{lifagui@bitzh.edu.cn}

\author[Y. H. Zhao]{Yuhang Zhao$^*$}
\address{School of Mathematics, Nanjing University, Nanjing 210093, P. R. China}
\email{yuhangzhao@smail.nju.edu.cn}

\subjclass[2020]{53C42, 53C40, 53A10}
\keywords{minimal surfaces, Lu's second-gap conjecture, flat minimal tori, normal scalar curvature}
\thanks{F. G. Li is partially supported by NSFC (No. 12271040 and 12501061),
 the Guangdong Provincial Association for Science and Technology Youth Talent Support Program (No. SKXRC2026413) and the Research Start-up Funding of Beijing Institute of Technology (No. 5640011253301)}
 \thanks{$^*$ Corresponding author.}
\begin{document}

\begin{abstract}
Lu conjectured that, for each dimension and codimension, there exists a positive
gap above the first pinching value for the quantity $S+\lambda_2$ on closed
minimal submanifolds of the unit sphere, where $S$ is the squared norm of the
second fundamental form and $\lambda_2$ is the second eigenvalue of Lu's
fundamental matrix. We disprove this second-gap conjecture for minimal surfaces
in every codimension $q\ge3$. More precisely, in every odd codimension $q\ge3$
we construct linearly full closed embedded flat minimal tori with $S\equiv2$ for
which the constant values of $S+\lambda_2$ are dense in $(2,3)$. Thus the first
pinching value $2$ can be approached from above by closed embedded minimal
surfaces, and no uniform second gap exists in codimension at least three.
Together with the known positive results in codimensions one and two, our
examples complete the codimension picture for Lu's second-gap problem for
minimal surfaces: the conjecture holds precisely in codimensions $1$ and $2$,
and fails in every codimension $q\ge3$.
\end{abstract}

\maketitle
\tableofcontents

\section{Introduction}
Let $F:M^n\longrightarrow \Sph^{n+q}$ be a minimal immersion of a closed
$n$-dimensional manifold into the unit sphere. Throughout, ``closed'' means
compact without boundary. We denote by $h$ the second fundamental form of $F$
and put
\[
    S=|h|^2.
\]
For a local orthonormal normal frame $\{\xi_\alpha\}_{\alpha=1}^q$, let
$S_\alpha$ be the corresponding shape operator. Lu's fundamental matrix is the
positive semidefinite symmetric matrix
\[
    \mathcal A=(a_{\alpha\beta})_{1\le\alpha,\beta\le q},
    \qquad
    a_{\alpha\beta}=\langle S_\alpha,S_\beta\rangle
    =\Tr(S_\alpha S_\beta),
\]
where the inner product is the Hilbert--Schmidt inner product on symmetric
endomorphisms of $T_pM$. We order its eigenvalues by
\[
    \lambda_1\ge\lambda_2\ge\cdots\ge\lambda_q\ge0.
\]
When $q=1$, we use the standard convention $\lambda_2:=0$. The trace identity
gives
\[
    S=\Tr\mathcal A=\sum_\alpha\lambda_\alpha.
\]
Thus $S+\lambda_2$ refines the total size $S$ by also recording the
second-largest normal component of $h$ after diagonalizing its normal Gram
matrix.

The quantity $S+\lambda_2$ entered the subject through Lu's work on the normal
scalar curvature conjecture. De Smet, Dillen, Verstraelen, and Vrancken
\cite{DDVV99} proposed the pointwise DDVV inequality
\[
    \rho+\rho^\perp\le \|H\|^2+\kappa
\]
for submanifolds of real space forms $N^{n+m}(\kappa)$, where $\rho$ is the
normalized scalar curvature, $H$ is the mean curvature vector, and $\rho^\perp$
is the normal scalar curvature. Dillen, Fastenakels, and Van der Veken
\cite{DFV07} reformulated this as the matrix inequality
\[
    \sum_{r,s=1}^m\|[B_r,B_s]\|^2
    \le
    \left(\sum_{r=1}^m\|B_r\|^2\right)^2
\]
for real symmetric matrices $B_1,\ldots,B_m$. The conjecture was proved
independently by Ge--Tang \cite{GeTang} and Lu \cite{Lu}; see also
\cite{Ge14,GLLZ20,GLZ21} for related DDVV-type inequalities. Using a refined
commutator inequality, Lu proved the following first-gap theorem.

\begin{Lutheorem}[{\cite{Lu}}]
Let $M^n$ be a closed immersed minimal submanifold of $\Sph^{n+q}$. If
\[
    0\le S+\lambda_2\le n,
\]
then $M$ is totally geodesic, a Clifford product in the hypersurface case, or
the Veronese surface in the exceptional two-dimensional case.
\end{Lutheorem}
This theorem refines the classical pinching theorems of Simons \cite{Simons}
and Chern--do Carmo--Kobayashi \cite{CDK}, as well as later rigidity results of
Yau \cite{Yau1974,Yau1975}, Shen \cite{Shen}, and Li--Li \cite{LiLi}.
Leng--Xu \cite{LengXu} subsequently obtained a generalized Lu rigidity theorem
for submanifolds with parallel mean curvature.

At the end of Section~5 of \cite{Lu}, Lu proposed the following second-gap
conjecture.
\begin{conjecture}[Lu's second-gap conjecture]
\label{conj:Lu}
Let $M^n$ be a closed immersed minimal submanifold of $\Sph^{n+q}$. Assume that
$S+\lambda_2$ is constant and
\[
    S+\lambda_2>n.
\]
Then there is a constant $\varepsilon(n,q)>0$ such that
\[
    S+\lambda_2>n+\varepsilon(n,q).
\]
\end{conjecture}

In codimension one, Lu's matrix has only one normal direction. With the
standard convention $\lambda_2=0$, Lu's quantity becomes $S$, and
Conjecture~\ref{conj:Lu} becomes the second-gap form of Chern's conjecture for
minimal hypersurfaces. In higher codimension, however, Chern's problem and
Lu's problem are different: Chern's question concerns possible constant values
of $S$ itself, whereas Lu's conjecture concerns the refined quantity
$S+\lambda_2$.

For reference we recall the higher-codimensional version of the Chern
discreteness problem.
\begin{conjecture}[Chern's conjecture in higher codimension]
\label{conj:chern-higher}
Let $M^n\subset\Sph^{n+q}$ be a closed minimal submanifold with constant scalar
curvature, equivalently with constant $S=|h|^2$.
\begin{enumerate}[label=\textup{(\roman*)},leftmargin=2em]
\item For fixed $n$ and $q$, the set of possible constant values of $S$ is
   discrete.
\item There exists a constant
\[
    S_0>\frac{n}{2-1/q}
\]
such that
\[
    \frac{n}{2-1/q}<S\le S_0
    \quad\Longrightarrow\quad
    S=S_0.
\]
\end{enumerate}
\end{conjecture}
The problem goes back to Chern \cite{CDK}; see also the surveys of Scherfner--Weiss--Yau \cite{yausurvey} and
Xu--Xu \cite{XuXuSurvey}.
Simons' first-gap theorem gives the critical value
$n/(2-1/q)$ in codimension $q$. Subsequent pinching results of Shen
\cite{Shen}, Li--Li \cite{LiLi}, and Chen--Xu \cite{ChenXu} motivate the
second-gap question above this first pinching level. In the hypersurface case,
Chern's problem and its second-gap forms have been studied by Peng--Terng
\cite{pengterng83,pengterngseminar}, Chang \cite{chang}, Yang--Cheng
\cite{YangCheng98}, Ding--Xin \cite{dingxin}, Xu--Xu \cite{xuxu13,xuxu17}, and
Deng--Gu--Wei \cite{deng}, among others.
Recent progress on Chern-type and isoparametric
rigidity problems includes Tang--Wei--Yan \cite{TangWeiYan}, Tang--Yan
\cite{TangYan}, He--Xu--Zhao \cite{HeXuZhao}, Ge--Liu--Luo--Yan
\cite{GeLiuLuoYan}, Ge--Tan--Yan--Zhang \cite{GeTanYanZhang}, and Tao
\cite{Tao26}.

The counterexamples in this paper do not address Conjecture~\ref{conj:chern-higher}.
Indeed, along our whole family one has
\[
    S\equiv2.
\]
Only $\lambda_2$ varies. Thus the examples disprove a uniform second gap for
Lu's refined quantity without producing any new constant value of $S$.
%For a minimal surface in the unit sphere, the Gauss equation is
%\[
%    K=1-\frac{S}{2},
%\]
%so intrinsic flatness is exactly the condition $S\equiv2$. 
The examples
below are closed embedded flat minimal tori. They should not be confused with
surfaces having flat normal bundle, as considered in other pinching theorems:
in our examples the normal scalar curvature is positive away from the limiting
degeneration.

For comparison, we recall the following positive results of Ding--Ge--Li--Yang
\cite{DGLY}; see also Ding--Ge--Li \cite{dinggeli25,DingGeLiSimon2026allgap} for related pinching
rigidity results for minimal surfaces in spheres.
\begin{DGLYtheorem}[\cite{DGLY}]
Let $M^2$ be a closed minimal surface immersed in $\Sph^{2+q}$. The following
assertions hold.
\begin{enumerate}[label=\textup{(\roman*)},leftmargin=2em]
\item If $M^2$ is a linearly full minimal $2$-sphere and $S+\lambda_2$ is
constant, then the Gaussian curvature $K$ is constant and $M$ is one of
Calabi's minimal $2$-spheres \cite{Calabi}.
\item If $M^2$ is a linearly full minimal $2$-sphere and $S+\lambda_2>2$, then
\[
    \max_{p\in M}(S+\lambda_2)(p)\ge\frac52.
\]
Equality holds if and only if $q=4$ and $M$ is Calabi's minimal $2$-sphere with
$K=1/6$.
\item If $S+\lambda_2>2$ and
$
    \rho^\perp
    \le
    \frac12\sqrt{(S+\lambda_2-2)\gamma S}
$
 for some $0\le\gamma\le2/3$, then
\[
    \max_{p\in M}(S+\lambda_2)(p)
    \ge \max_{p\in M}S(p)
    >\frac{40+12\gamma}{18+9\gamma}
    \ge2.
\]
In particular, if the normal bundle is flat, equivalently $\rho^\perp=0$, and
$S+\lambda_2>2$, then
\[
    \max_{p\in M}(S+\lambda_2)(p)>\frac{20}{9}.
\]
\end{enumerate}
\end{DGLYtheorem}
More recently, Ge--Li--Zhang \cite{GeLiZhang26} settled the  
codimension-two case.
\begin{GLZtheorem}[{\cite{GeLiZhang26}}]
Let $M^2$ be a closed minimal surface immersed in $\mathbb{S}^{4}$ and suppose that
\[
        S+\lambda_2\equiv c
\]
for a constant $c$.  Then $c\in\{0,2\}$.  More precisely, exactly one of the following occurs:
\begin{enumerate}
\item[(i)] $c=0$ and $M$ is a totally geodesic $2$-sphere;
\item[(ii)] $c=2$ and $M$ is a Clifford torus in a totally geodesic
$\Sph^3\subset\Sph^4$;
\item[(iii)] $c=2$ and $M$ is the Veronese surface in $\Sph^4$.
\end{enumerate}
\end{GLZtheorem}
Thus Lu's second-gap conjecture holds for closed minimal surfaces in codimension
two. Together with the hypersurface results of Peng--Terng
\cite{pengterng83,pengterngseminar} and the counterexamples constructed in the
present paper, this completes the codimension picture of Lu's second-gap
conjecture for minimal surfaces: it holds precisely in codimensions $q=1,2$ and
fails in every codimension $q\ge3$.

The positive results above show that Lu's conjecture survives under genus-zero
or normal-curvature pinching hypotheses in arbitrary codimension, and without
such auxiliary hypotheses in codimension two. By contrast, the present paper
shows that, once such additional hypotheses are removed, Conjecture~\ref{conj:Lu}
fails already in the class of closed embedded flat tori in codimension at least
three. It is important to stress, however, that our counterexamples are
two-dimensional flat tori. They do not rule out the possibility that Lu's
second-gap conclusion remains valid in other geometric classes, for instance
for higher-dimensional minimal submanifolds or for simply connected minimal
submanifolds. In these directions, the conjecture remains an interesting open
rigidity problem.

The construction is explicit. With the sign convention
$\Delta=\operatorname{div}\nabla$, Takahashi's theorem \cite{Takahashi} says
that an isometric immersion $F:M^n\to\Sph^{n+q}$ is minimal if and only if
\[
    \Delta F+nF=0.
\]
On a flat torus, such maps are finite sums of sine--cosine frequency pairs of a
common length. Bryant's flat-torus criterion, recalled in
Section~\ref{sec:construction-computations}, explains the corresponding
convex-hull condition. Recent work on minimal immersions of flat tori includes
\cite{LWX,LWXfirst}.

Our first main theorem is a density statement.
\begin{theorem}
\label{thm:main}
Let $q\ge3$ be odd. % and write $q=2m-3$. 
Among closed linearly full
embedded flat minimal tori in $\Sph^{2+q}$, the set of constant values of
$S+\lambda_2$ is dense in $(2,3)$. Consequently, the analogue of Chern's
discreteness statement fails for Lu's refined quantity $S+\lambda_2$ in every
odd codimension $q\ge3$.
\end{theorem}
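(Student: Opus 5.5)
We construct explicit flat minimal tori via Takahashi's theorem. We use $m=(q+3)/2$ frequency pairs, so the target is $\R^{2m}=\R^{3+q}$ and $q=2m-3$ is odd. Fix vectors $v_1,\dots,v_m\in\R^2$ with $|v_j|=\sqrt2$ and positive weights $a_j$ with $\sum a_j^2=1$. Set
\[
F(x)=\bigl(a_1e^{i\langle v_1,x\rangle},\dots,a_me^{i\langle v_m,x\rangle}\bigr)\in\C^m=\R^{2m}.
\]
Then $|F|=1$ and $\Delta F=-2F$. The immersion is isometric iff $\sum_j a_j^2 v_jv_j^T=\mathrm{Id}_2$, which is Bryant's convex-hull criterion recalled in Section~\ref{sec:construction-computations}. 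By Takahashi's theorem, $F$ is then a minimal flat immersion. The Gauss equation $K=1-S/2$ with $K=0$ gives $S\equiv2$.

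The map $F$ factors through a torus, and is closed, if the $v_j$ generate a lattice in the dual. Concretely, we take $v_j$ in a rational direction set: $v_j=\sqrt2(\cos\theta_j,\sin\theta_j)$ with the $v_j$ spanning a rank-two lattice (e.g.\ integer vectors of equal norm after scaling, or a lattice admitting many vectors of the same length). Embeddedness holds after passing to the quotient by the kernel lattice $\{x:\langle v_j,x\rangle\in2\pi\Z\ \forall j\}$, because $F$ is injective on that quotient. Linear fullness holds because the frequencies $\pm v_j$ are distinct and all $a_j>0$.

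Next we compute Lu's matrix. The Hessian of $F$ is $-(a_j e^{i\langle v_j,x\rangle} v_jv_j^T)_j$. The second fundamental form is its normal part, i.e.\ the Hessian with the $F$ component removed (the tangential part vanishes by flatness in these coordinates). Writing $v_jv_j^T-\tfrac12|v_j|^2\mathrm{Id}=\mathrm{Id}+(\text{traceless part } T_j)$, the traceless parts $T_j$ live in the 2-dimensional space of traceless symmetric $2\times2$ matrices, identified with $\C$ via $T_j\leftrightarrow e^{2i\theta_j}$ with $|T_j|^2=2$.

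Hence $h$, viewed as a map from the traceless symmetric space $\cong\R^2$ to the normal space, has a $2\times2$ Gram matrix $G$ whose nonzero eigenvalues coincide with those of $\mathcal A$. So $\lambda_1+\lambda_2=S=2$ and $\lambda_3=\dots=0$. Using the $S^1$-equivariance in each factor, $G$ is constant:
\[
G=\sum_j a_j^2\, T_j\otimes T_j,
\]
minus the projection onto the $F$-direction and the $\mathrm{Id}$ component, which both drop out for traceless inputs. In complex form, $\lambda_{1,2}=1\pm|\mu|$ with $\mu=c\sum_j a_j^2 e^{4i\theta_j}$ for a normalizing constant $c$, fixed by $\mu=1$ for the Clifford case of two orthogonal directions. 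The isometry condition is $\sum a_j^2 e^{2i\theta_j}=0$. Therefore
\[
S+\lambda_2=3-\Bigl|\sum_j a_j^2e^{4i\theta_j}\Bigr|.
\]
Verifying this formula, in particular the normalization, is the key computation, and we expect it to be the main obstacle along with density.

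For density, choose $m\ge3$ directions with rational-lattice compatibility. The set of $(a_j^2,\theta_j)$ satisfying $\sum a_j^2 e^{2i\theta_j}=0$ and $\sum a_j^2=1$ is connected, and $|\sum a_j^2e^{4i\theta_j}|$ ranges over all of $(0,1)$. It tends to $1$ toward the Clifford configuration and can be made small, e.g.\ with three directions at angles $0,\pi/3,2\pi/3$ and equal weights, where the value is $0$. The closedness requirement restricts the $\theta_j$ to a dense set: angles of vectors of a common length in lattices $\Z\oplus\tau\Z$, or Pythagorean-type rational points on the circle after rescaling. Those rational-angle configurations are dense in the solution variety, and the weights may be arbitrary positive reals, so the attainable values are dense in $(2,3)$.

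Finally, for larger odd $q$ we add extra frequency pairs with small weights while preserving full linearity. We must also check that the $v_j$ together with $-v_j$ are pairwise distinct to retain linear fullness and embeddedness; this is the delicate bookkeeping step. The Chern-type discreteness failure for $S+\lambda_2$ then follows immediately.
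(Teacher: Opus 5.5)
Your proposal is correct and follows the paper's overall route:
\begin{itemize}
\item homogeneous eigenmaps $x\mapsto(a_je^{i\langle v_j,x\rangle})_j$ with equal-length frequencies, made minimal by Takahashi's theorem;
\item the rank-two reduction giving $\lambda_{1,2}=1\pm|R|$ with $R=\sum_j a_j^2e^{4i\theta_j}$;
\item the embedded quotient by the kernel lattice, and linear fullness from distinct characters.
\end{itemize}
Your Clifford normalization is unnecessary: $\lambda_1+\lambda_2=S=2$ already fixes the constant. Moreover, the radial unit vectors lie in distinct coordinate planes and are orthonormal, so the Gram operator is exactly $\sum_j a_j^2T_j\otimes T_j$.

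The genuine difference is in the density step. You argue softly: $|R|$ is continuous on a connected configuration space, tends to $1$ near the Clifford configuration, vanishes at the equilateral triple, and rational directions are dense in the solution set. The paper instead writes down explicit rational one-parameter blocks and pads them with orthogonal pairs of small equal weight.
\begin{itemize}
\item For even $m$ it uses two orthogonal pairs with $|R_0|=|\cos2\theta_2|$.
\item For odd $m$ it uses the symmetric block $(1,0),(\pm\varepsilon,\zeta)$ with $R_0=1-4\varepsilon^2$; your equilateral triple is its endpoint $\varepsilon=1/2$.
\end{itemize}
The padding preserves $\sum a_j^2e^{2i\theta_j}=0$ exactly and gives explicit error bounds. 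It also avoids any connectedness claim; a path is all you need, and this block supplies one.

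Two points in your version must be made explicit. First, both the rational approximation and your ``add extra frequency pairs with small weights'' break the moment condition $\sum a_j^2e^{2i\theta_j}=0$, which must then be restored. This can be done because three distinct points $e^{2i\theta_j}$ are affinely independent. Hence $w\mapsto(\sum w_j,\sum w_je^{2i\theta_j})$ has rank three, and small weight corrections keep all weights positive. Second, if you instead add balanced orthogonal pairs, as the paper does, directions come two at a time. A three-direction base then reaches only odd $m$, which is exactly why the paper switches to a four-direction base when $m$ is even.
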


The next theorem records an explicit sequence approaching the first-gap value.
\begin{theorem}[Odd-codimensional flat-torus sequence]
\label{thm:sequence}
Let $m\ge3$ and put $q=2m-3$. There is a sequence of closed, linearly full,
embedded flat minimal tori
\[
    \widehat F_j:\widehat T_j^2\longrightarrow \Sph^{2m-1}=\Sph^{2+q}
\]
such that, for
\[
    R_j=\sum_\alpha w_{\alpha,j}e^{4i\theta_{\alpha,j}},
\]
where $w_{\alpha,j}$ are the weights and $\theta_{\alpha,j}$ are the frequency
angles,
\[
    S\equiv2,
    \quad
    \lambda_1\equiv 1+|R_j|,
    \quad
    \lambda_2\equiv 1-|R_j|,
    \quad
    \rho_0^\perp
    =\sum_{\alpha,\beta}\|[S_\alpha,S_\beta]\|^2
    \equiv4(1-|R_j|^2).
\]
Moreover $0<|R_j|<1$ and $|R_j|\to1$. Hence
\[
    S+\lambda_2\equiv3-|R_j|>2,
    \qquad
    S+\lambda_2\longrightarrow2.
\]
\end{theorem}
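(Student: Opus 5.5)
The plan is to write down explicit Bryant-type flat tori with $m$ frequency pairs, compute their second fundamental form in closed form, and then choose the frequencies so that $|R_j|\to1$. Identify $\R^{2m}=\C^m$ and consider
\[
F(x)=\bigl(\sqrt{w_1}\,e^{i\langle k_1,x\rangle},\dots,\sqrt{w_m}\,e^{i\langle k_m,x\rangle}\bigr),
\qquad k_\alpha=\sqrt2\,(\cos\theta_\alpha,\sin\theta_\alpha),
\]
with $w_\alpha>0$ and $\sum_\alpha w_\alpha=1$. Then $|F|=1$ and $\Delta F+2F=0$. The metric is $g_{ij}=\sum_\alpha w_\alpha k_\alpha^ik_\alpha^j$, so $F$ is isometric precisely when
\[
\sum_\alpha w_\alpha e^{2i\theta_\alpha}=0,
\]
which is Bryant's convex-hull condition. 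By Takahashi's theorem $F$ is then minimal. The torus is $\R^2/\Lambda$, where $\Lambda=\{x:\langle k_\alpha,x\rangle\in2\pi\Z\ \forall\alpha\}$. This is a lattice as soon as the $k_\alpha$ span a rank-two subgroup, e.g.\ when all $k_\alpha$ lie in $c\,\Z^2$ for one $c>0$. With this choice of $\Lambda$ the map is injective, hence an embedding. Linear fullness holds when the $\pm k_\alpha$ are pairwise distinct and every $w_\alpha>0$, since then the coordinate functions are linearly independent.

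\textbf{Invariants.} The Gauss equation $K=1-S/2$ together with flatness gives $S\equiv2$. Use the complex coordinate $z=x_1+ix_2$. Minimality means the normal-valued second fundamental form is determined by $u=h_{11}=-h_{22}$ and $v=h_{12}$. Hence
\[
a_{\alpha\beta}=2(u_\alpha u_\beta+v_\alpha v_\beta),
\]
and the nonzero eigenvalues of $\mathcal A$ are twice those of the Gram matrix of $(u,v)$. Put $\zeta=u-iv$; up to a constant factor this is the normal part of $F_{zz}$. The eigenvalues of the Gram matrix are $\tfrac12\bigl(|\zeta|^2\pm|\zeta\cdot\zeta|\bigr)$, where the dot is the complex-bilinear product. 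Since $S=2|\zeta|^2=2$, this gives $\lambda_{1,2}=1\pm|\zeta\cdot\zeta|$.

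Next I compute $F_{zz}$ explicitly: its $\alpha$-th component is $-\tfrac12\sqrt{w_\alpha}\,e^{2i\theta_\alpha}e^{i\langle k_\alpha,x\rangle}$. I then check that it is already normal. It is orthogonal to $F$ because $\langle F_{zz},F\rangle=-\langle F_z,F_{\bar z}\rangle=0$ by conformality. It is orthogonal to $F_z$ and $F_{\bar z}$ because these pairings differentiate the constant metric. In the real form, each frequency pair contributes $\cos^2+\sin^2=1$ to the bilinear square. Hence
\[
\zeta\cdot\zeta=c\sum_\alpha w_\alpha e^{4i\theta_\alpha}=c\,R,
\]
and the normalization $|\zeta|^2=1$ (at $R=0$ one needs $\lambda_1=\lambda_2$) fixes $|c|=1$. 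Therefore $\lambda_1=1+|R|$, $\lambda_2=1-|R|$, and $S+\lambda_2=3-|R|$. For the normal curvature, only $S_u$ and $S_v$ carry the shape operator, and they are trace-free and commute only when proportional. A direct computation gives
\[
\sum_{\alpha,\beta}\|[S_\alpha,S_\beta]\|^2=4\det(\text{Gram of }(2u,2v))/(\cdots)=4\lambda_1\lambda_2=4(1-|R|^2),
\]
and I would verify the constant by the model case $R=0$. These identities are essentially formal once the normality of $F_{zz}$ is established.

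\textbf{Choice of frequencies; main obstacle.} We need rational directions $\theta_\alpha$ (points of a circle $|k|^2=N$ in $\Z^2$, rescaled) and positive weights such that:
\begin{itemize}
\item[(a)] $\sum_\alpha w_\alpha e^{2i\theta_\alpha}=0$;
\item[(b)] the $\pm k_\alpha$ are distinct;
\item[(c)] $0<|R|<1$ with $|R|\to1$ along the sequence.
\end{itemize}
The idea is to cluster the directions near $0$ and near $\pi/2$. Then the $e^{2i\theta}$ nearly cancel in pairs while the $e^{4i\theta}$ all point near $1$.

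Concretely, I would use Pythagorean-type integer points on circles $x^2+y^2=N$ with many representations; such points have dense angles. For each $j$ I pick $m$ distinct directions within $1/j$ of $\{0,\pi/2\}$, with at least one direction on each side of each cluster, so that $0$ lies in the interior of the convex hull of the $e^{2i\theta_\alpha}$. This lets me solve (a) with all $w_\alpha>0$, rescaling the $k_\alpha$ to length $\sqrt2$ via a common factor. Then
\[
|R|\ge\cos(4/j)\to1.
\]
Strictness $|R|<1$ holds because not all $e^{4i\theta_\alpha}$ are equal, and this can be arranged by a generic choice.

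The delicate point is doing all of this simultaneously with exact rational (lattice) directions, strictly positive weights, and $m\ge3$ distinct pairs. The positivity of the barycentric solution of (a) requires checking that $0$ lies in the open convex hull.
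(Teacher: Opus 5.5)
Your proposal is correct and follows the paper's global architecture: homogeneous Bryant-type tori with $k_\alpha=\sqrt2\,u_\alpha$, Takahashi's theorem, $S\equiv2$ from the Gauss equation, embedding on the full period lattice as in Proposition~\ref{prop:embedded-quotient}, and directions clustered near $0$ and $\pi/2$ so that the $e^{2i\theta_\alpha}$ cancel while the $e^{4i\theta_\alpha}$ stay near $1$. Two steps are done differently.

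\emph{Lu's eigenvalues.} The paper projects the radial vectors to $\nu_\alpha=n_\alpha-\sqrt{w_\alpha}F$ and computes the real Gram matrix of $h_{11},h_{12}$ through cancellation identities. Your complex route is shorter and more conceptual. Since $F_{zz}=-\tfrac12\sum_\alpha\sqrt{w_\alpha}\,e^{-2i\theta_\alpha}n_\alpha$ is already normal, $\zeta\cdot\zeta=\bar R$ is just the constant coefficient of the holomorphic quartic differential, and no projection bookkeeping is needed.

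\emph{Choice of frequencies.} The paper is fully explicit: equal-weight orthogonal pairs $u,u^\perp$, plus, for odd $m$, the block $(1,0),(\pm\varepsilon,\zeta)$. This gives rational weights and the exact value $R^{(3)}=1-4\varepsilon^2$. Your argument is existential. Strictly positive weights exist because $0$ lies in the interior of $\Conv\{e^{2i\theta_\alpha}\}$; this is an open condition, so density of rational directions suffices, and then $|R|\ge\cos(4/j)$. This is uniform in $m$, and you correctly note that only the frequencies, not the weights, need to be rational.

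A few things need repair.

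\begin{itemize}
\item Read literally, ``at least one direction on each side of each cluster'' requires four directions, so it misses the base case $m=3$ ($q=3$). There, put the lone direction of one cluster on the axis, e.g.\ $\theta_1=0$ and $\theta_{2,3}=\pi/2\pm\gamma$. The points $1,-e^{\mp2i\gamma}$ then span a triangle containing $0$ in its interior; this is exactly the paper's three-direction block.
\item The identity you want is $\langle F_{zz},F\rangle=-\langle F_z,F_z\rangle=0$. Note that $\langle F_z,F_{\bar z}\rangle=\tfrac12$, not $0$.
\item Looking at $R=0$ cannot determine $|c|$ in $\zeta\cdot\zeta=cR$, since then $cR=0$ for every $c$. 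What does determine it is $|\zeta|^2=1$ against $|F_{zz}|^2=\tfrac14$. More simply, the constant is exact: the $-\delta_{ij}F$ terms cancel in $F_{11}-F_{22}-2iF_{12}$, so $\zeta=2F_{zz}$ and $\zeta\cdot\zeta=\bar R$.
\item For that bilinear square to be $\tfrac14\sum_\alpha w_\alpha e^{-4i\theta_\alpha}$, the $\alpha$-th block of $F_{zz}$ must be read as a complex multiple of the real unit vector $n_\alpha$, not as an entry of $\C^m$.
\item Likewise $\rho_0^\perp=16\det\operatorname{Gram}(u,v)=4\lambda_1\lambda_2$ follows directly, with no model-case check.
\item $k_\alpha\in c\Z^2$ gives a lattice only together with spanning $\R^2$. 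Spanning follows from the positive-definiteness of the moment identity.
\end{itemize}
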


\begin{corollary}
\label{cor:all-q-nonfull}
For every odd $q\ge3$, there is no constant $\varepsilon(q)>0$ such that every
linearly full closed embedded \emph{flat minimal torus} in $\Sph^{2+q}$ with
constant $S+\lambda_2>2$ must satisfy
\[
    S+\lambda_2>2+\varepsilon(q).
\]
If the fullness assumption is dropped, the same failure holds among closed
embedded flat minimal tori in every codimension $q\ge3$.
\end{corollary}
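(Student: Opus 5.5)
The corollary follows directly from Theorem~\ref{thm:sequence} (or Theorem~\ref{thm:main}); the plan is to argue by contradiction and then handle the non-full case by a totally geodesic inclusion.

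\emph{Full case, odd $q\ge3$.} Write $q=2m-3$ with $m=(q+3)/2\ge3$. Suppose some $\varepsilon(q)>0$ had the stated property. Theorem~\ref{thm:sequence} provides closed, linearly full, embedded flat minimal tori $\widehat F_j$ in $\Sph^{2+q}$ with
\[
    S+\lambda_2\equiv 3-|R_j|, \qquad 0<|R_j|<1, \qquad |R_j|\to1 .
\]
Each such torus has constant $S+\lambda_2>2$, and these constants tend to $2$. Choosing $j$ with $1-|R_j|<\varepsilon(q)$ gives $2<S+\lambda_2<2+\varepsilon(q)$, which is a contradiction. Theorem~\ref{thm:main} would work equally well here, since density in $(2,3)$ already supplies values in $(2,2+\varepsilon)$.

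\emph{Arbitrary $q\ge3$ without fullness.} If $q$ is odd, the previous step applies. If $q$ is even, then $q\ge4$ and $q':=q-1\ge3$ is odd. Take the tori $\widehat F_j$ in $\Sph^{2+q'}$ and compose them with a totally geodesic equatorial embedding $\iota:\Sph^{2+q'}\hookrightarrow\Sph^{2+q}$. The composition is still a closed embedded flat minimal torus, since the induced metric is unchanged and $\iota$ is a totally geodesic embedding.

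The only point to check is that $S$ and $\lambda_2$ are unchanged. Since $\iota$ is totally geodesic, the second fundamental form of $\iota\circ\widehat F_j$ equals that of $\widehat F_j$, and it takes values in the original normal bundle. Complete an orthonormal normal frame by adding the unit normal $\nu$ of $\Sph^{2+q'}$ in $\Sph^{2+q}$. Then $S_\nu=0$, so Lu's matrix becomes $\mathcal A\oplus(0)$. This adds only the eigenvalue $0$, so the ordered eigenvalues $\lambda_1\ge\lambda_2$ stay the same, because $\lambda_2=1-|R_j|>0$. In particular $S+\lambda_2\equiv 3-|R_j|$ still holds in codimension $q$, and the same contradiction argument finishes the proof.

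There is no real obstacle, since all the geometric content sits in Theorem~\ref{thm:sequence}. The only mild subtlety is in the non-full case: one must observe that $\lambda_2=1-|R_j|>0$ strictly, so the extra zero eigenvalue cannot displace $\lambda_2$ in the ordering.
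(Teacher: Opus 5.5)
Your proof is correct and follows essentially the paper's route: the full odd case comes straight from the sequence/density theorems, and the non-full case from a totally geodesic inclusion that only appends zero eigenvalues to Lu's matrix. The paper embeds the explicit $\Sph^5$ family of Theorem~\ref{thm:explicit-q3} into every $\Sph^{2+q}$ instead of using codimension $q-1$, which makes no difference; note also that you do not need $\lambda_2>0$, since appending zeros to a list of nonnegative eigenvalues never changes the two largest ones.
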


Finally, we prove an exact identity which explains the role of area.
\begin{theorem}
\label{thm:genus-area}
Let $M^2\subset \Sph^{2+q}$ be a closed connected orientable immersed minimal surface of
genus $g$; no flatness or torus assumption is imposed in this statement. In
codimension one we use the convention $\lambda_2=0$. Assume that
\[
    S+\lambda_2\equiv c
\]
is constant. Then
\begin{equation}\label{eq:gb-identity-intro}
    c=2+
    \frac{8\pi(g-1)}{\Area(M)}
    +\frac{1}{\Area(M)}\int_M\lambda_2\,d\mu.
\end{equation}
In particular, let $\lambda_1^\Delta(M)>0$ denote the first positive eigenvalue of
the nonnegative operator $-\Delta$ for the induced metric, equivalently
$\Delta u+\lambda_1^\Delta(M)u=0$ for a first eigenfunction. If $g>2$, then
\[
    c>2+\frac{g-1}{\left\lfloor\frac{g+3}{2}\right\rfloor}
    \lambda_1^\Delta(M).
\]
If $g=2$, then
$
    c\ge 2+\frac12\lambda_1^\Delta(M).
$
\end{theorem}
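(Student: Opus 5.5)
The identity comes from the Gauss equation combined with Gauss--Bonnet. The inequalities then follow from a known eigenvalue bound in terms of genus. For a minimal surface in $\Sph^{2+q}$ the Gauss equation gives $2K = 2 - S$, so $S = 2-2K$ pointwise. Integrating $c = S+\lambda_2$ over $M$ gives
\[
c\,\Area(M) = 2\Area(M) - 2\int_M K\,d\mu + \int_M \lambda_2\,d\mu .
\]
By Gauss--Bonnet, $\int_M K\,d\mu = 2\pi\chi(M) = 4\pi(1-g)$. Substituting and dividing by $\Area(M)$ yields \eqref{eq:gb-identity-intro}. In codimension one, $\lambda_2=0$ and the same computation applies. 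No flatness is used, only minimality, which gives the Gauss equation in the form above.

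For the eigenvalue consequences I would use $\lambda_2\ge0$ (since $\mathcal A$ is positive semidefinite), which gives
\[
c \ge 2 + \frac{8\pi(g-1)}{\Area(M)} .
\]
Next I would invoke the Yang--Yau bound (with Karpukhin's correction of the constant, or the El Soufi--Ilias form). For an orientable closed surface of genus $g$,
\[
\lambda_1^\Delta(M)\,\Area(M) \le 8\pi\left\lfloor\tfrac{g+3}{2}\right\rfloor .
\]
Thus $1/\Area(M) \ge \lambda_1^\Delta / \bigl(8\pi\lfloor (g+3)/2\rfloor\bigr)$, and for $g\ge2$,
\[
c \ge 2 + \frac{g-1}{\lfloor (g+3)/2\rfloor}\,\lambda_1^\Delta(M).
\]
For $g=2$ we have $\lfloor 5/2\rfloor=2$, which gives $c\ge 2+\tfrac12\lambda_1^\Delta(M)$ as stated.

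The main obstacle is the strict inequality for $g>2$. Here I would rely on the known fact that the Yang--Yau-type bound $8\pi\lfloor (g+3)/2\rfloor$ is strict for $g\ge3$. Karpukhin showed the maximal first eigenvalue satisfies $\Lambda_1(g) < 8\pi\lfloor (g+3)/2\rfloor$ for $g>2$ (rather, $\Lambda_1(g)<8\pi\lfloor (g+3)/2\rfloor$ with equality only possible at $g=2$). I would cite that strictness.

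If strictness is not available, an alternative is to obtain it from $\lambda_2$. Equality $c = 2+8\pi(g-1)/\Area(M)$ forces $\lambda_2\equiv0$ on $M$. Then $\mathcal A$ has rank $\le 1$ everywhere, and a first-normal-space argument reduces $M$ to lie in a totally geodesic $\Sph^3$. That case is the hypersurface case, where equality in the eigenvalue bound would need to be excluded separately. I would take the strict Karpukhin bound as the cleanest route.
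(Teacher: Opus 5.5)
Your proposal is correct and follows the paper's proof essentially step for step. You get the identity from the Gauss equation $S=2-2K$ together with Gauss--Bonnet, then drop $\int_M\lambda_2\,d\mu\ge0$ and apply the Yang--Yau bound $\lambda_1^\Delta(M)\Area(M)\le 8\pi\lfloor (g+3)/2\rfloor$, with Karpukhin's strictness theorem for $g>2$ giving the strict inequality. Your fallback via $\lambda_2\equiv0$ is unnecessary (and as sketched it would still leave the equality case of the eigenvalue bound unresolved), so citing Karpukhin's strict bound is the right route, exactly as in the paper.
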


The last theorem is area-dependent. For $g\ge2$, the topological term in
\eqref{eq:gb-identity-intro} degenerates as $\Area(M)\to\infty$, and the
eigenvalue form degenerates when $\lambda_1^\Delta(M)\to0$. Therefore the
identity does not by itself provide a uniform second gap in higher genus.
Whether an area-escape phenomenon analogous to that of the flat-torus examples
can occur in genus at least two remains open. When $g=1$, the topological term
vanishes. Along each fixed-$\ell$ sequence in the explicit finite-covering
family constructed below, the gap $S+\lambda_2-2$ is of reciprocal-area order
on the chosen covering domains, while the area of the embedded quotient also
depends on the covering degree.

\section{Preliminaries}
\label{sec:preliminaries}

Let
$
    F:M^2\longrightarrow \Sph^{2+q}
$
be an immersed surface. We use tangent indices $i,j,k\in\{1,2\}$ and normal
indices $\alpha,\beta\in\{1,\ldots,q\}$. Given a local orthonormal tangent frame
$e_1,e_2$ and a local orthonormal normal frame $\xi_1,\ldots,\xi_q$, write
\[
    h(e_i,e_j)=\sum_{\alpha=1}^q h_{ij}^\alpha\xi_\alpha,
    \qquad h_{ij}^\alpha=h_{ji}^\alpha.
\]
Equivalently, if $S_\alpha$ is the shape operator in the normal direction
$\xi_\alpha$, then
\[
    S_\alpha=(h_{ij}^\alpha)_{1\le i,j\le2}.
\]
The second fundamental form is $h=\sum_{i,j,\alpha}h_{ij}^\alpha\omega_i
\omega_j\xi_\alpha$, and the mean curvature vector is
\[
    H=\frac12\sum_{\alpha=1}^q(h_{11}^\alpha+h_{22}^\alpha)\xi_\alpha.
\]
The immersion is minimal precisely when $H\equiv0$.

We shall also use the following matrix notation for vectors in $\R^2$. The
symbol $I_2$ denotes the $2\times2$ identity matrix. If $u=(u_1,u_2)\in\R^2$, then
$u\otimes u$ denotes the rank-one symmetric matrix
\[
    u\otimes u=
    \begin{pmatrix}
    u_1^2 & u_1u_2\\
    u_1u_2 & u_2^2
    \end{pmatrix}.
\]
Equivalently, $u\otimes u$ is the quadratic form $(u\cdot dx)^2$ on $\R^2$. Thus
an identity such as
\[
    \sum_\alpha w_\alpha u_\alpha\otimes u_\alpha=\frac12 I_2
\]
means the three scalar equations obtained by comparing the $(1,1)$, $(2,2)$, and
$(1,2)$ entries.
\subsection{Lu's fundamental matrix for a minimal surface}

Let $M^2\subset \Sph^{2+q}$ be a minimal surface in the unit sphere. For a normal vector $\xi$,
the shape operator $S_\xi$ is defined by
\[
    \langle S_\xi X,Y\rangle=\langle h(X,Y),\xi\rangle
\]
for tangent vector fields $X, Y$ on $M$.
Given a local orthonormal normal frame $\{\xi_\alpha\}_{\alpha=1}^q$, Lu's
fundamental matrix is
\[
    \mathcal A=(a_{\alpha\beta})_{1\le\alpha,\beta\le q},
    \qquad
    a_{\alpha\beta}=\langle S_\alpha,S_\beta\rangle
    =\Tr(S_\alpha S_\beta).
\]
If the normal frame is changed by an orthogonal matrix, then $\mathcal A$ is
orthogonally conjugated. Hence its eigenvalues
\[
    \lambda_1\ge\lambda_2\ge\cdots\ge\lambda_q\ge0
\]
are globally well-defined.

At a fixed point, choose an orthonormal tangent frame $e_1,e_2$. Since the
surface is minimal, there are normal vectors $a,b\in N_pM$ such that
\[
    h_{11}=a,
    \qquad
    h_{12}=b,
    \qquad
    h_{22}=-a.
\]
After identifying $N_pM$ with $\R^q$ and writing
$a=(a_1,\ldots,a_q)$, $b=(b_1,\ldots,b_q)$, the shape operator in the normal
direction $\xi_\alpha$ is
\[
    S_\alpha=
    \begin{pmatrix}
    a_\alpha & b_\alpha\\
    b_\alpha & -a_\alpha
    \end{pmatrix}.
\]
Consequently
\[
    S=|h|^2=2|a|^2+2|b|^2
\]
and
\begin{equation}
\label{eq:fundamental-matrix}
    \mathcal A=2aa^T+2bb^T.
\end{equation}
In particular $\operatorname{rank}\mathcal A\le2$. The two possible nonzero
eigenvalues, still denoted by $\lambda_1\ge\lambda_2\ge0$, satisfy
\begin{equation}\label{eq:surface-trace-lu}
    S=\Tr \mathcal A=\lambda_1+\lambda_2.
\end{equation}
This identity is used repeatedly below.

We shall distinguish two conventions for normal scalar curvature. The
unnormalized quantity is
\[
    \rho_0^\perp:=\sum_{\alpha,\beta=1}^q\|[S_\alpha,S_\beta]\|^2.
\]
The normalized quantity is
\begin{equation}
\label{eq:rho-normalized}
    \rho^\perp=\frac{1}{n(n-1)}\sqrt{\rho_0^\perp}
\end{equation}
for an $n$-dimensional submanifold. In this paper most computations are for
surfaces, but keeping both notations prevents convention conflicts.

For a minimal surface, a direct computation gives
\[
    [S_\alpha,S_\beta]
    =2(a_\alpha b_\beta-b_\alpha a_\beta)
    \begin{pmatrix}0&1\\ -1&0\end{pmatrix}.
\]
Since
\[
    \sum_{\alpha,\beta=1}^q
    (a_\alpha b_\beta-b_\alpha a_\beta)^2
    =2\bigl(|a|^2|b|^2-\langle a,b\rangle^2\bigr),
\]
we obtain
\[
    \rho_0^\perp
    =16\bigl(|a|^2|b|^2-\langle a,b\rangle^2\bigr).
\]
By the preceding rank-two formula, equivalently by Lemma~3.2 in \cite{DGLY}, the two possible nonzero eigenvalues of $\mathcal A$ are twice the eigenvalues of the
Gram matrix
\[
    \begin{pmatrix}
    |a|^2&\langle a,b\rangle\\
    \langle a,b\rangle&|b|^2
    \end{pmatrix}.
\]
Thus
\[
    \rho_0^\perp=4\lambda_1\lambda_2
    =S^2-(\lambda_1-\lambda_2)^2.
\]
These elementary identities are the only local formulas for Lu's matrix needed
in the construction.

\section{Flat tori and Lu's fundamental matrix}
\label{sec:construction-computations}

This section gives the construction of the flat tori and computes Lu's fundamental matrix. We keep all metric normalizations explicit.

We first recall Bryant's flat-torus criterion.
A rank-two lattice in $\R^2$ means a discrete subgroup of the form
\[
    A=\Z\omega_1+\Z\omega_2
\]
for two linearly independent vectors $\omega_1,\omega_2\in\R^2$. If
$E=\{z_1,\ldots,z_N\}\subset\C\simeq\R^2$ is a finite set, its convex hull is
\[
    \Conv(E)
    =\left\{\sum_{j=1}^N t_jz_j: t_j\ge0,\ \sum_{j=1}^N t_j=1\right\}.
\]
Equivalently, it is the smallest convex subset of $\C$ containing $E$.

\begin{bryanttheorem}[\cite{Bryant}]
Let $A\subset\R^2\simeq\C$ be a rank-two lattice and let
\[
    A^*=\{(\xi,\eta)\in\R^2: \xi x+\eta y\in2\pi\Z
    \text{ for every }(x,y)\in A\}
\]
be its $2\pi$-dual lattice. For $r>0$, let $H(A,r)$ be the convex hull in
$\C$ of
\[
    C(A,r)=\{(\xi+i\eta)^2:(\xi,\eta)\in A^*,\ \xi^2+\eta^2=r^2\}.
\]
Then $\R^2/A$ admits a minimal immersion into some unit sphere with induced
metric
\[
    \langle df,df\rangle=\frac{r^2}{2}(dx^2+dy^2)
\]
if and only if $0\in H(A,r)$. In particular, such an immersion exists for some
$r>0$ if and only if $A^*$ has a basis $\alpha,\beta\in\C$ such that
\[
    \operatorname{Re}(\alpha/\beta)\in\Q,
    \qquad
    |\alpha|^2/|\beta|^2\in\Q.
\]
\end{bryanttheorem}

\subsection{Odd-codimensional homogeneous flat tori}
\label{sec:flat-construction}

We first give the construction in the form used for all odd codimensions. Let
\[
    q=2m-3,\qquad m\ge3.
\]
Thus the target sphere is
\[
    \Sph^{2+q}=\Sph^{2m-1}\subset\R^{2m}.
\]
The integer $m$ is the number of sine--cosine frequency pairs. Start from the flat two-torus
\begin{equation}
\label{eq:flat-domain-metric}
    T_L^2=(\R/2\pi\Z)^2,
    \qquad
    g_L=\frac{L^2}{2}(dx_1^2+dx_2^2),
\end{equation}
where $L>0$. Let
\[
    v_1,\ldots,v_m\in\Z^2
\]
be nonzero integer vectors, pairwise distinct up to sign, and assume that they have a common Euclidean length
\begin{equation}
\label{eq:common-frequency-length}
    |v_1|=\cdots=|v_m|=L.
\end{equation}
Write
\[
    u_\alpha=\frac{v_\alpha}{L}
    = (\cos\theta_\alpha,\sin\theta_\alpha),
    \qquad \alpha=1,\ldots,m.
\]
Choose positive weights $w_\alpha>0$ satisfying the isometry condition
\begin{equation}
\label{eq:general-isometry}
    \sum_{\alpha=1}^m w_\alpha=1,
    \qquad
    \sum_{\alpha=1}^m w_\alpha u_\alpha\otimes u_\alpha=\frac12 I_2.
\end{equation}
Here $I_2$ is the $2\times2$ identity matrix and
$u_\alpha\otimes u_\alpha$ is the rank-one matrix defined in the preliminaries.
The second identity is precisely the condition that the map constructed below
induce the Euclidean metric: it says that the weighted average of the matrices
$u_\alpha\otimes u_\alpha$ is $\frac12 I_2$.
This condition is also Bryant's convex-hull condition in the
present homogeneous setting. Indeed, write
\[
    u_\alpha=(u_{\alpha1},u_{\alpha2})
    = (\cos\theta_\alpha,\sin\theta_\alpha),
    \qquad
    z_\alpha=u_{\alpha1}+iu_{\alpha2}=e^{i\theta_\alpha}.
\]
Since \(\sum_\alpha w_\alpha=1\), comparison of real and imaginary parts gives
\[
\begin{aligned}
    \sum_\alpha w_\alpha u_\alpha\otimes u_\alpha=\frac12 I_2
    \quad&\Longleftrightarrow\quad
    \sum_\alpha w_\alpha(u_{\alpha1}^2-u_{\alpha2}^2)=0,
    \quad
    \sum_\alpha w_\alpha u_{\alpha1}u_{\alpha2}=0  \\
    \quad&\Longleftrightarrow\quad
    \sum_\alpha w_\alpha z_\alpha^2=0.
\end{aligned}
\]
To connect this with Bryant's formulation for integer frequencies, observe that
\(v_\alpha=Lu_\alpha\). Hence
\[
    (v_{\alpha1}+iv_{\alpha2})^2=L^2z_\alpha^2.
\]
Multiplication by the common positive factor \(L^2\) does not affect whether the
origin is a convex combination. Therefore the moment condition for the weights
\(w_\alpha\) is equivalent to
\[
    \sum_\alpha w_\alpha (v_{\alpha1}+iv_{\alpha2})^2=0.
\]
Because the weights are positive and sum to one, this says precisely that the
origin is a convex combination, with coefficients \(w_\alpha\), of the points
\((v_{\alpha1}+iv_{\alpha2})^2\). In particular,
\[
    0\in\Conv\{(v_{\alpha1}+iv_{\alpha2})^2:1\le\alpha\le m\}.
\]
We shall nevertheless give the direct calculation below, since its
normalizations are used later in the computation of Lu's fundamental matrix.
Define
\begin{equation}
\label{eq:general-eigenmap}
\begin{aligned}
    F(x)=(&\sqrt{w_1}\cos\ip{v_1}{x},\sqrt{w_1}\sin\ip{v_1}{x},\ldots, \\
    &\sqrt{w_m}\cos\ip{v_m}{x},\sqrt{w_m}\sin\ip{v_m}{x}).
\end{aligned}
\end{equation}
The following lemma records the standard homogeneous flat-torus eigenfunction construction with the normalizations used later.

\begin{lemma}
\label{lem:general-eigenmap}
Under the assumptions above, the map
\[
    F:(T_L^2,g_L)\longrightarrow \Sph^{2m-1}
\]
is a linearly full isometric minimal immersion. Its domain is intrinsically flat, and
\begin{equation}
\label{eq:general-area}
    \Area(T_L^2,g_L)=2\pi^2L^2.
\end{equation}
Moreover
\[
    S\equiv2.
\]
\end{lemma}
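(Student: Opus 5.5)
We plan to verify each claim of the lemma by direct computation from the explicit formula \eqref{eq:general-eigenmap}, in the order: image in the sphere, isometry, minimality via Takahashi's theorem, linear fullness, area, and finally $S\equiv2$ via Gauss's equation.

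\emph{Sphere and isometry.} First, $|F|^2=\sum_\alpha w_\alpha(\cos^2+\sin^2)=\sum_\alpha w_\alpha=1$ by \eqref{eq:general-isometry}, so $F$ maps into $\Sph^{2m-1}$. Next, differentiating each pair $(\sqrt{w_\alpha}\cos\ip{v_\alpha}{x},\sqrt{w_\alpha}\sin\ip{v_\alpha}{x})$ gives a vector of length $\sqrt{w_\alpha}\,|\ip{v_\alpha}{dx}|$. Hence
\[
    \langle dF,dF\rangle=\sum_\alpha w_\alpha(v_\alpha\cdot dx)^2
    =L^2\sum_\alpha w_\alpha (u_\alpha\cdot dx)^2
    =\frac{L^2}{2}(dx_1^2+dx_2^2)=g_L,
\]
using the second identity in \eqref{eq:general-isometry}. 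Thus $F$ is an isometric immersion, and $g_L$ is a constant multiple of the Euclidean metric, so the domain is intrinsically flat. Since $x\in(\R/2\pi\Z)^2$ and $\sqrt{\det g_L}=L^2/2$, the area is $(2\pi)^2L^2/2=2\pi^2L^2$.

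\emph{Minimality.} For $g_L$ the Laplacian is $\Delta=\frac{2}{L^2}(\partial_1^2+\partial_2^2)$. Each coordinate function $\cos\ip{v_\alpha}{x}$ or $\sin\ip{v_\alpha}{x}$ satisfies $(\partial_1^2+\partial_2^2)f=-|v_\alpha|^2f=-L^2f$ by \eqref{eq:common-frequency-length}. Hence $\Delta F=-2F$, and Takahashi's theorem with $n=2$ gives minimality.

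\emph{Linear fullness.} The $2m$ functions $\cos\ip{v_\alpha}{x}$, $\sin\ip{v_\alpha}{x}$ are linearly independent on the torus, because the $v_\alpha$ are nonzero and pairwise distinct up to sign. Concretely, they are $L^2$-orthogonal Fourier modes. Since $w_\alpha>0$, no nonzero linear functional on $\R^{2m}$ vanishes on $F$. This step needs the most care, but it is still routine: the key point is that $v_\alpha\neq\pm v_\beta$ for $\alpha\ne\beta$, so the exponentials $e^{\pm i\ip{v_\alpha}{x}}$ are $2m$ distinct characters.

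\emph{The value of $S$.} For a minimal surface in the unit sphere, the Gauss equation gives $2K=2-S$, that is, $K=1-S/2$. Indeed, $R_{1212}=1+\langle h_{11},h_{22}\rangle-|h_{12}|^2=1-|a|^2-|b|^2=1-S/2$, using the normalization of the preliminaries. Since the induced metric is flat, $K\equiv0$, and therefore $S\equiv2$.
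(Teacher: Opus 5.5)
Your proposal is correct and follows essentially the same route as the paper: unit norm from $\sum_\alpha w_\alpha=1$, the induced metric from the moment identity, minimality via $\Delta_{g_L}F+2F=0$ and Takahashi's theorem, linear fullness via orthogonality of distinct characters, the area from $\sqrt{\det g_L}=L^2/2$, and $S\equiv2$ from $K=1-S/2$. The paper additionally checks explicitly that $F$ descends to $(\R/2\pi\Z)^2$ because each $v_\alpha\in\Z^2$, a step you leave implicit but which is immediate.
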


\begin{proof}
We first check that the formula \eqref{eq:general-eigenmap} defines a map on the torus
$T_L^2=(\R/2\pi\Z)^2$. If $\gamma\in2\pi\Z^2$, then, because each $v_\alpha$ has integer components,
\[
    \langle v_\alpha,x+\gamma\rangle-\langle v_\alpha,x\rangle
    =\langle v_\alpha,\gamma\rangle\in2\pi\Z.
\]
Thus the two functions $\cos\langle v_\alpha,x\rangle$ and
$\sin\langle v_\alpha,x\rangle$ have the same values at $x$ and
$x+\gamma$. This proves that every coordinate function in
\eqref{eq:general-eigenmap} is well-defined on $T_L^2$, and hence so is
$F$.

The map takes values in the unit sphere. Indeed, using the first identity in
\eqref{eq:general-isometry}, we obtain
\[
\begin{aligned}
    |F(x)|^2
    &=\sum_{\alpha=1}^m w_\alpha
    \bigl(\cos^2\langle v_\alpha,x\rangle+\sin^2\langle v_\alpha,x\rangle\bigr) \\
    &=\sum_{\alpha=1}^m w_\alpha
    =1.
\end{aligned}
\]
Therefore $F(x)\in\Sph^{2m-1}$ for every $x\in T_L^2$.

We now compute the induced metric. For a fixed index $\alpha$, let
\[
    \phi_\alpha(x)=\langle v_\alpha,x\rangle,
\]
and consider the pair of coordinate functions
\[
    \sqrt{w_\alpha}(\cos\phi_\alpha,\sin\phi_\alpha)
\]
in the $\alpha$-th coordinate two-plane of $\R^{2m}$. For $i=1,2$, the partial derivative of this pair with respect to $x_i$ is
\[
    \sqrt{w_\alpha}\,v_{\alpha i}(-\sin\phi_\alpha,\cos\phi_\alpha).
\]
Hence the contribution of this two-plane to the inner product of the coordinate
vectors $\partial_iF$ and $\partial_jF$ is
\[
    w_\alpha v_{\alpha i}v_{\alpha j}
    \bigl(\sin^2\phi_\alpha+\cos^2\phi_\alpha\bigr)
    =w_\alpha v_{\alpha i}v_{\alpha j}.
\]
Different coordinate two-planes in $\R^{2m}$ are mutually orthogonal. Therefore
\begin{equation}
\label{eq:general-induced-metric-computation}
    \langle \partial_iF,\partial_jF\rangle
    =\sum_{\alpha=1}^m w_\alpha v_{\alpha i}v_{\alpha j}.
\end{equation}
Since $v_\alpha=Lu_\alpha$, the second identity in \eqref{eq:general-isometry} gives
\[
    \sum_{\alpha=1}^m w_\alpha v_{\alpha i}v_{\alpha j}
    =L^2\sum_{\alpha=1}^m w_\alpha u_{\alpha i}u_{\alpha j}
    =\frac{L^2}{2}\delta_{ij}.
\]
Comparing with the definition of $g_L$ in \eqref{eq:flat-domain-metric}, we get
\[
    F^*g_{\Sph^{2m-1}}=g_L.
\]
Thus $F$ is an isometric immersion. In particular, since $g_L$ is positive
definite, the differential of $F$ has rank two everywhere.

Next we prove minimality. From \eqref{eq:flat-domain-metric}, the
geometric Laplacian of the flat metric $g_L$ is
\begin{equation}
\label{eq:general-flat-laplacian}
    \Delta_{g_L}
    =\frac{2}{L^2}\left(\frac{\partial^2}{\partial x_1^2}
    +\frac{\partial^2}{\partial x_2^2}\right).
\end{equation}
For each $\alpha$, the ordinary Euclidean second derivative gives
\[
    \left(\frac{\partial^2}{\partial x_1^2}
    +\frac{\partial^2}{\partial x_2^2}\right)
    \cos\langle v_\alpha,x\rangle
    =-|v_\alpha|^2\cos\langle v_\alpha,x\rangle,
\]
and the same identity holds with the cosine replaced by the sine. Using the
common-length assumption \eqref{eq:common-frequency-length} in
\eqref{eq:general-flat-laplacian}, we obtain
\[
    \Delta_{g_L}\cos\langle v_\alpha,x\rangle
    =-2\cos\langle v_\alpha,x\rangle,
    \qquad
    \Delta_{g_L}\sin\langle v_\alpha,x\rangle
    =-2\sin\langle v_\alpha,x\rangle.
\]
Applying this to all coordinate functions in \eqref{eq:general-eigenmap}
gives
\begin{equation}
\label{eq:general-takahashi-equation}
    \Delta_{g_L}F+2F=0.
\end{equation}
Since the domain has dimension two, Takahashi's theorem \cite{Takahashi},
together with \eqref{eq:general-takahashi-equation}, implies that $F$ is
minimal in the unit sphere.

We next prove that the immersion is linearly full. Suppose that a constant
vector $A=(A_1,\ldots,A_{2m})\in\R^{2m}$ satisfies
$\langle A,F(x)\rangle\equiv0$ on $T_L^2$. By the formula
\eqref{eq:general-eigenmap}, this means that
\begin{equation}
\label{eq:linear-full-identity}
    \sum_{\alpha=1}^m \sqrt{w_\alpha}
    \bigl(A_{2\alpha-1}\cos\langle v_\alpha,x\rangle
    +A_{2\alpha}\sin\langle v_\alpha,x\rangle\bigr)\equiv0.
\end{equation}
The character orthogonality relation on $(\R/2\pi\Z)^2$ is
\[
    \int_{[0,2\pi]^2}e^{i\langle n,x\rangle}\,dx
    =\begin{cases}(2\pi)^2,&n=0,\\0,&n\in\Z^2\setminus\{0\}.
    \end{cases}
\]
Because the vectors $v_\alpha$ are nonzero and pairwise distinct up to sign,
the characters with frequencies $\pm v_\alpha$ are pairwise distinct. Fix an
index $\beta$. Multiplying \eqref{eq:linear-full-identity} by
$\cos\langle v_\beta,x\rangle$ and integrating over $[0,2\pi]^2$ therefore
leaves only the cosine term with $\alpha=\beta$. Since $w_\beta>0$, we obtain
$A_{2\beta-1}=0$. Multiplication by
$\sin\langle v_\beta,x\rangle$ gives $A_{2\beta}=0$ in the same way. Because
$\beta$ was arbitrary, all components of $A$ vanish.
Thus no nonzero constant vector is orthogonal to the image of $F$, and the image
is not contained in any proper linear subspace of $\R^{2m}$. Therefore $F$ is
linearly full.

It remains to compute the area and the value of $S$. The metric in
\eqref{eq:flat-domain-metric} has constant area element
\[
    d\mu_{g_L}=\sqrt{\det\left(\frac{L^2}{2}I_2\right)}\,dx_1dx_2
    =\frac{L^2}{2}\,dx_1dx_2.
\]
Integrating over the coordinate square $[0,2\pi]^2$ gives
\[
    \Area(T_L^2,g_L)
    =\int_0^{2\pi}\int_0^{2\pi}\frac{L^2}{2}\,dx_1dx_2
    =2\pi^2L^2,
\]
which is \eqref{eq:general-area}.

Finally, the metric $g_L$ has constant coefficients, so the intrinsic Gaussian
curvature of $(T_L^2,g_L)$ is identically zero. For a minimal surface in the
unit sphere, the Gauss equation gives
\[
    K=1-\frac{S}{2}.
\]
Since $K\equiv0$, we obtain $S\equiv2$. This completes the proof.
\end{proof}

The torus used in Lemma~\ref{lem:general-eigenmap} is chosen only for convenience: it makes all
frequency functions single-valued and keeps the computation simple. It may,
however, be a finite cover of the natural torus determined by the frequencies.
The next observation passes to that natural quotient, defined by the full period
lattice, and records that the induced map is embedded.
\begin{proposition}[Embedded quotient]
\label{prop:embedded-quotient}
Let $k_1,\ldots,k_m\in\R^2$ be such that their integer span
\[
    \mathcal L=\langle k_1,\ldots,k_m\rangle_{\Z}\subset\R^2
\]
is a rank-two lattice of frequencies, and let
\[
    \Lambda_{\mathcal L}
    =\{x\in\R^2:\langle k,x\rangle\in2\pi\Z
    \text{ for every }k\in\mathcal L\}
\]
be its period lattice. Assume that
\[
    w_\alpha>0\quad(\alpha=1,\ldots,m),
    \qquad
    \sum_{\alpha=1}^m w_\alpha=1.
\]
Then the map
\[
    F:\R^2/\Lambda_{\mathcal L}\longrightarrow \Sph^{2m-1},
    \qquad
    F(x)=\bigl(\sqrt{w_\alpha}\cos\langle k_\alpha,x\rangle,
    \sqrt{w_\alpha}\sin\langle k_\alpha,x\rangle\bigr)_{\alpha=1}^m
\]
is injective. Consequently, whenever this map is an immersion, it is an
embedding. More generally, if the same formula is considered on
$\R^2/\Gamma$, where $\Gamma\subset\Lambda_{\mathcal L}$ has finite index,
then it is the composition of the finite covering
$\R^2/\Gamma\to\R^2/\Lambda_{\mathcal L}$ with this embedded map.
\end{proposition}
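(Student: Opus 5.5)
The plan is to show injectivity directly from the coordinate functions, using only positivity of the weights and the definition of the period lattice. First I check that $F$ is well defined on $\R^2/\Lambda_{\mathcal L}$. If $\gamma\in\Lambda_{\mathcal L}$, then $\langle k_\alpha,\gamma\rangle\in2\pi\Z$ for every $\alpha$, because each $k_\alpha\in\mathcal L$. Hence every pair $(\cos\langle k_\alpha,x\rangle,\sin\langle k_\alpha,x\rangle)$ is $\Lambda_{\mathcal L}$-periodic. The normalization $\sum_\alpha w_\alpha=1$ gives $|F|=1$, exactly as in the proof of Lemma~\ref{lem:general-eigenmap}.

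For injectivity, suppose $F(x)=F(y)$. Fix $\alpha$. The $\alpha$-th coordinate pair gives
\[
\sqrt{w_\alpha}\,e^{i\langle k_\alpha,x\rangle}=\sqrt{w_\alpha}\,e^{i\langle k_\alpha,y\rangle}.
\]
Since $w_\alpha>0$, we may divide and obtain $\langle k_\alpha,x-y\rangle\in2\pi\Z$ for each $\alpha$. The set $\{k\in\R^2:\langle k,x-y\rangle\in2\pi\Z\}$ is an additive subgroup. It contains every generator $k_\alpha$, so it contains their integer span $\mathcal L$. By definition this means $x-y\in\Lambda_{\mathcal L}$, so $[x]=[y]$ in $\R^2/\Lambda_{\mathcal L}$. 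This is the only step with real content, and the key point is that positivity of every weight is exactly what lets us recover each phase modulo $2\pi$.

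The embedding statement then follows from standard topology. The quotient $\R^2/\Lambda_{\mathcal L}$ is compact because $\Lambda_{\mathcal L}$ has rank two. This holds since $\mathcal L$ is a rank-two lattice, so $\Lambda_{\mathcal L}$ is its $2\pi$-dual lattice, which is again a lattice. An injective continuous map from a compact space to a Hausdorff space is a homeomorphism onto its image. An injective immersion of a compact manifold is therefore an embedding.

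Finally, suppose $\Gamma\subset\Lambda_{\mathcal L}$ has finite index. Then the formula is still well defined on $\R^2/\Gamma$. It factors through the natural projection $\R^2/\Gamma\to\R^2/\Lambda_{\mathcal L}$, since both maps are induced by the same formula on $\R^2$. That projection is a finite covering of degree $[\Lambda_{\mathcal L}:\Gamma]$. This gives the stated composition.

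I expect no serious obstacle. The only point needing care is that the relevant lattice is the dual of the whole frequency lattice $\mathcal L$, not of the individual $k_\alpha$; the subgroup argument in the injectivity step handles this.
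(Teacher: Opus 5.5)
Your proposal is correct and follows essentially the same route as the paper. Both arguments use the same steps: well-definedness from $k_\alpha\in\mathcal L$, recovery of each phase modulo $2\pi$ from the positivity of the weights, extension from the generators to all of $\mathcal L$, and the fact that an injective immersion of a compact manifold is an embedding. The paper additionally makes two points explicit that you leave implicit: that $\Lambda_{\mathcal L}=2\pi B^{-T}\Z^2$ (so the quotient is compact), and that the map on $\R^2/\Lambda_{\mathcal L}$ is an immersion whenever its lift to $\R^2/\Gamma$ is, because the covering projection is a local diffeomorphism.
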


\begin{proof}
Choose a basis $\ell_1,\ell_2$ of the rank-two lattice $\mathcal L$ and let
$B$ be the matrix with columns $\ell_1,\ell_2$. Thus $\mathcal L=B\Z^2$.
Because $\ell_1$ and $\ell_2$ generate $\mathcal L$, the defining condition for
$\Lambda_{\mathcal L}$ is equivalent to imposing it only on these two basis
vectors:
\[
    x\in\Lambda_{\mathcal L}
    \quad\Longleftrightarrow\quad
    \langle \ell_1,x\rangle,\langle \ell_2,x\rangle\in 2\pi\Z.
\]
Since
\[
    B^Tx=
    \begin{pmatrix}
        \langle \ell_1,x\rangle\\
        \langle \ell_2,x\rangle
    \end{pmatrix},
\]
we have
\[
    x\in\Lambda_{\mathcal L}
    \quad\Longleftrightarrow\quad
    B^Tx\in 2\pi\Z^2
    \quad\Longleftrightarrow\quad
    x\in 2\pi (B^T)^{-1}\Z^2.
\]
Here
\[
    B^{-T}:=(B^T)^{-1}=(B^{-1})^T.
\]
Thus
\[
    \Lambda_{\mathcal L}=2\pi B^{-T}\Z^2
    =\left\{2\pi B^{-T}
    \begin{pmatrix}n_1\\ n_2\end{pmatrix}:n_1,n_2\in\Z\right\}.
\]
Equivalently, the two columns of $2\pi B^{-T}$ form the period basis dual to
$\ell_1,\ell_2$ in the $2\pi$ normalization. In particular,
$\Lambda_{\mathcal L}$ is a rank-two lattice and $\R^2/\Lambda_{\mathcal L}$ is a
compact torus.

For $k\in\mathcal L$ set
\[
    \chi_k(x)=e^{i\langle k,x\rangle}.
\]
Then $\chi_k(x+\gamma)=\chi_k(x)$ for every $k\in\mathcal L$ and every
$\gamma\in\Lambda_{\mathcal L}$, by the definition of
$\Lambda_{\mathcal L}$. In particular the characters
$\chi_{k_1},\ldots,\chi_{k_m}$, and hence the coordinate functions of
$F$, are well-defined on $\R^2/\Lambda_{\mathcal L}$. Moreover,
\[
    |F(x)|^2=\sum_{\alpha=1}^m w_\alpha=1,
\]
so the map indeed takes values in the unit sphere.

It remains to prove injectivity. Consider the character map
\[
    \Phi:\R^2\longrightarrow (\mathbb S^1)^m,
    \qquad
    \Phi(x)=(\chi_{k_1}(x),\ldots,\chi_{k_m}(x)).
\]
The map $F$ is obtained from $\Phi$ by replacing each complex number
$\chi_{k_\alpha}(x)$ by its real and imaginary parts and multiplying the
corresponding two coordinates by the positive number $\sqrt{w_\alpha}$.
Since all $w_\alpha$ are positive,
\[
    F(x)=F(y)
    \quad\Longleftrightarrow\quad
    \Phi(x)=\Phi(y).
\]
Thus $F(x)=F(y)$ implies
\[
    \langle k_\alpha,x-y\rangle\in2\pi\mathbb Z
    \qquad (\alpha=1,\ldots,m).
\]
Since $\mathcal L$ is generated over $\mathbb Z$ by
$k_1,\ldots,k_m$, the same congruence holds for every $k\in\mathcal L$:
\[
    \langle k,x-y\rangle\in2\pi\mathbb Z
    \qquad\text{for all }k\in\mathcal L.
\]
By the definition of $\Lambda_{\mathcal L}$, this is exactly the statement that
$x-y\in\Lambda_{\mathcal L}$. Hence $x$ and $y$ represent the same point of
$\R^2/\Lambda_{\mathcal L}$, so the induced map is injective.

An injective immersion from a compact manifold is an embedding. More
explicitly, suppose that the same formula is considered on
$\R^2/\Gamma$, where $\Gamma\subset\Lambda_{\mathcal L}$ has finite index.
The natural quotient map
\[
    \pi:\R^2/\Gamma\longrightarrow \R^2/\Lambda_{\mathcal L}
\]
is a finite covering, and the map on $\R^2/\Gamma$ is the composition of $\pi$
with the map on $\R^2/\Lambda_{\mathcal L}$ constructed above. Since $d\pi$ is
an isomorphism at every point, the latter map is an immersion whenever its
lift to $\R^2/\Gamma$ is an immersion. This proves the final assertion.
\end{proof}

We shall use the following explicit computation of Lu's fundamental matrix for
the maps constructed in Lemma~\ref{lem:general-eigenmap}. This formula is the
main algebraic input in the construction in Proposition~\ref{prop:full-odd}.

\begin{lemma}[Lu's fundamental matrix for the flat construction]
\label{lem:flat-construction-matrix}
Let $F$ be the map in \eqref{eq:general-eigenmap}, and assume that
\eqref{eq:general-isometry} holds. Write
\[
    u_\alpha=(\cos\theta_\alpha,\sin\theta_\alpha),
    \qquad \alpha=1,\ldots,m,
\]
and set
\begin{equation}
\label{eq:R-def-general}
    R=\sum_{\alpha=1}^m w_\alpha e^{4i\theta_\alpha}.
\end{equation}
Then $S\equiv2$, and the two possible nonzero eigenvalues of Lu's fundamental matrix are
\begin{equation}
\label{eq:general-lambda}
    \lambda_1=1+|R|,
    \qquad
    \lambda_2=1-|R|.
\end{equation}
All remaining eigenvalues are zero. In particular,
\begin{equation}
\label{eq:general-Lu-value}
    S+\lambda_2=3-|R|,
\end{equation}
and
\begin{equation}
\label{eq:general-rho}
    \rho_0^\perp=4\lambda_1\lambda_2=4(1-|R|^2).
\end{equation}
\end{lemma}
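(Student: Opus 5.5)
The plan is to compute the second fundamental form of $F$ directly from the explicit formula \eqref{eq:general-eigenmap}, read off the normal vectors $a,b$ of the preliminaries, and then use $\mathcal A=2aa^T+2bb^T$ from \eqref{eq:fundamental-matrix}. The nonzero eigenvalues of $\mathcal A$ are twice those of the Gram matrix of $a,b$. The identity $S\equiv2$ is already Lemma~\ref{lem:general-eigenmap}.

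\textbf{Second fundamental form.} For each $\alpha$, let $E_\alpha(x)=(\cos\phi_\alpha,\sin\phi_\alpha)$, placed in the $\alpha$-th coordinate two-plane of $\R^{2m}$, and let $E_\alpha^\perp=(-\sin\phi_\alpha,\cos\phi_\alpha)$ in the same plane. These $2m$ vectors form an orthonormal frame of $\R^{2m}$. In this frame
\[
F=\sum_\alpha\sqrt{w_\alpha}E_\alpha,\qquad \partial_iF=\sum_\alpha\sqrt{w_\alpha}v_{\alpha i}E_\alpha^\perp,\qquad \partial_i\partial_jF=-\sum_\alpha\sqrt{w_\alpha}v_{\alpha i}v_{\alpha j}E_\alpha .
\]
Hence $\langle\partial_i\partial_jF,\partial_kF\rangle=0$, so the flat Levi-Civita connection of $g_L$ has no Christoffel symbols in these coordinates and there is no tangential part to remove. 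Also $\langle\partial_i\partial_jF,F\rangle=-(g_L)_{ij}$. Therefore
\[
h(\partial_i,\partial_j)=\partial_i\partial_jF+(g_L)_{ij}F .
\]
In the orthonormal frame $e_i=(\sqrt2/L)\partial_i$, using $v_\alpha=Lu_\alpha$, this becomes
\[
h(e_i,e_j)=\sum_\alpha\sqrt{w_\alpha}\,\bigl(\delta_{ij}-2u_{\alpha i}u_{\alpha j}\bigr)E_\alpha .
\]
Since $I_2-2u_\alpha\otimes u_\alpha$ is minus the reflection matrix with entries $\cos2\theta_\alpha,\sin2\theta_\alpha$, we get $h_{11}=a$, $h_{12}=b$, $h_{22}=-a$ with
\[
a=-\sum_\alpha\sqrt{w_\alpha}\cos2\theta_\alpha\,E_\alpha,\qquad b=-\sum_\alpha\sqrt{w_\alpha}\sin2\theta_\alpha\,E_\alpha .
\]
These vectors are automatically normal, because each $E_\alpha$ is orthogonal to every $E_\beta^\perp$.

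\textbf{Eigenvalues.} By orthonormality of the $E_\alpha$, the Gram matrix of $a,b$ has entries
\[
|a|^2=\sum w_\alpha\cos^22\theta_\alpha,\qquad |b|^2=\sum w_\alpha\sin^22\theta_\alpha,\qquad \langle a,b\rangle=\sum w_\alpha\cos2\theta_\alpha\sin2\theta_\alpha .
\]
Applying the double-angle formulas and $\sum w_\alpha=1$, twice the Gram matrix is
\[
I_2+\begin{pmatrix}\operatorname{Re}R&\operatorname{Im}R\\ \operatorname{Im}R&-\operatorname{Re}R\end{pmatrix}.
\]
Its eigenvalues are $1\pm|R|$. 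This gives \eqref{eq:general-lambda}, and as a check the trace equals $2=S$. Since $\operatorname{rank}\mathcal A\le2$, all other eigenvalues of $\mathcal A$ vanish.

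\textbf{Consequences.} Formula \eqref{eq:general-Lu-value} follows from $S+\lambda_2=2+1-|R|$. Formula \eqref{eq:general-rho} follows from the preliminary identity $\rho_0^\perp=4\lambda_1\lambda_2$.

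\textbf{Expected obstacle.} The only delicate point is the normalization: the factor $2/L^2$ from the orthonormal frame must be combined with $|v_\alpha|=L$ to produce exactly $\delta_{ij}-2u_{\alpha i}u_{\alpha j}$. One must also justify that no tangential projection is needed. Everything else is a routine trigonometric identity.
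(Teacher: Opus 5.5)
Your proposal is correct and follows essentially the same route as the paper: you compute $h(e_i,e_j)=\sum_\alpha\sqrt{w_\alpha}\,(\delta_{ij}-2u_{\alpha i}u_{\alpha j})E_\alpha$ in the radial frame, express the Gram matrix of $h_{11},h_{12}$ through $\operatorname{Re}R$ and $\operatorname{Im}R$, and apply the rank-two reduction $\mathcal A=2aa^T+2bb^T$. The differences are cosmetic: you rescale the tangent frame instead of changing to orthonormal coordinates, and you read off the Gram entries directly from the orthonormality of the $E_\alpha$ in $\R^{2m}$, which legitimately bypasses the paper's detour through the projected vectors $\nu_\alpha=n_\alpha-\sqrt{w_\alpha}F$.
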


\begin{proof}
We give a pointwise computation. The proof is written in orthonormal
coordinates on the domain, so that the formula for Lu's fundamental matrix from
\eqref{eq:fundamental-matrix} can be applied directly.

By the definition of the flat metric in \eqref{eq:flat-domain-metric}, the
change of variables
\begin{equation}
\label{eq:lem33-orthonormal-coordinates}
    y=\frac{L}{\sqrt2}x
\end{equation}
turns the domain metric into
\[
    g_L=dy_1^2+dy_2^2.
\]
Since $v_\alpha=Lu_\alpha$ and
$u_\alpha=(\cos\theta_\alpha,\sin\theta_\alpha)$, the phase appearing in
\eqref{eq:general-eigenmap} becomes
\begin{equation}
\label{eq:lem33-rescaled-phase}
    \langle v_\alpha,x\rangle
    =\left\langle \sqrt2\,u_\alpha,y\right\rangle.
\end{equation}
Thus, in the orthonormal coordinates $y_1,y_2$, the corresponding frequency
vector is
\begin{equation}
\label{eq:lem33-rescaled-frequency}
    k_\alpha=\sqrt2\,u_\alpha
    =\sqrt2(\cos\theta_\alpha,\sin\theta_\alpha).
\end{equation}
In particular, $|k_\alpha|^2=2$ for every $\alpha$.

For each $\alpha$, write
\begin{equation}
\label{eq:lem33-phase-definition}
    \phi_\alpha(y)=\langle k_\alpha,y\rangle.
\end{equation}
In the $\alpha$-th coordinate two-plane of $\R^{2m}$, let
\begin{equation}
\label{eq:lem33-radial-angular-vectors}
\begin{aligned}
    n_\alpha&=(0,\ldots,0,
    \cos\phi_\alpha,\sin\phi_\alpha,0,\ldots,0),\\
    m_\alpha&=(0,\ldots,0,
    -\sin\phi_\alpha,\cos\phi_\alpha,0,\ldots,0).
\end{aligned}
\end{equation}
Here $n_\alpha$ is the radial unit vector in that coordinate two-plane and
$m_\alpha$ is the angular unit vector. Formula \eqref{eq:general-eigenmap},
together with \eqref{eq:lem33-rescaled-phase}, becomes
\begin{equation}
\label{eq:lem33-F-radial-sum}
    F=\sum_{\alpha=1}^m \sqrt{w_\alpha}\,n_\alpha.
\end{equation}

Subscripts will always denote partial derivatives with respect to the
orthonormal variables $y_1,y_2$. From \eqref{eq:lem33-phase-definition} and
\eqref{eq:lem33-radial-angular-vectors}, we have
\[
    \frac{\partial n_\alpha}{\partial y_i}=k_{\alpha i}m_\alpha,
    \qquad
    \frac{\partial m_\alpha}{\partial y_j}=-k_{\alpha j}n_\alpha.
\]
Differentiating \eqref{eq:lem33-F-radial-sum} gives
\begin{equation}
\label{eq:lem33-first-second-derivatives}
\begin{aligned}
    F_i&=\sum_{\alpha=1}^m
    \sqrt{w_\alpha}\,k_{\alpha i}m_\alpha,\\
    F_{ij}&=-\sum_{\alpha=1}^m
    \sqrt{w_\alpha}\,k_{\alpha i}k_{\alpha j}n_\alpha.
\end{aligned}
\end{equation}
Because the coordinates $y_1,y_2$ are flat orthonormal coordinates on the
domain, there are no Christoffel-symbol terms in this computation.

We now pass from ordinary Euclidean second derivatives to the second fundamental
form in the unit sphere. For a surface in the unit sphere, the Euclidean second
derivative decomposes as
\begin{equation}
\label{eq:lem33-sphere-second-derivative}
    F_{ij}=h_{ij}-\delta_{ij}F,
\end{equation}
where $h_{ij}$ is the second fundamental form of the surface as a submanifold of
$\Sph^{2m-1}$. Equivalently, $h_{ij}=F_{ij}+\delta_{ij}F$. Combining
\eqref{eq:lem33-F-radial-sum} with \eqref{eq:lem33-first-second-derivatives}, we
obtain
\begin{equation}
\label{eq:hij-radial-general}
    h_{ij}=\sum_{\alpha=1}^m
    \sqrt{w_\alpha}(\delta_{ij}-k_{\alpha i}k_{\alpha j})n_\alpha.
\end{equation}

Let us check explicitly that the vector in \eqref{eq:hij-radial-general} is a
normal vector to the surface inside the sphere. First, using the first identity
in \eqref{eq:general-isometry} and the metric identity already used in
\eqref{eq:general-induced-metric-computation}, we get
\begin{equation}
\label{eq:lem33-orthogonal-to-radius}
\begin{aligned}
    \langle h_{ij},F\rangle
    &=\sum_{\alpha=1}^m
    w_\alpha(\delta_{ij}-k_{\alpha i}k_{\alpha j}) \\
    &=\delta_{ij}-\sum_{\alpha=1}^m
    w_\alpha k_{\alpha i}k_{\alpha j}
    =0.
\end{aligned}
\end{equation}
Here the last equality follows from \eqref{eq:lem33-rescaled-frequency} and the
second identity in \eqref{eq:general-isometry}. Second, each vector $F_k$ in
\eqref{eq:lem33-first-second-derivatives} is a linear combination of the angular
vectors $m_\alpha$, whereas each vector $h_{ij}$ in
\eqref{eq:hij-radial-general} is a linear combination of the radial vectors
$n_\alpha$. The radial vectors are orthogonal to all angular vectors, so
\begin{equation}
\label{eq:lem33-orthogonal-to-tangent}
    \langle h_{ij},F_k\rangle=0
    \qquad (i,j,k=1,2).
\end{equation}
Equations \eqref{eq:lem33-orthogonal-to-radius} and
\eqref{eq:lem33-orthogonal-to-tangent} show that
\eqref{eq:hij-radial-general} is the spherical second fundamental form.

There is one small point which is useful later. A single radial vector
$n_\alpha$ is not tangent to the ambient sphere, because
\begin{equation}
\label{eq:lem33-radial-not-spherical}
    \langle n_\alpha,F\rangle=\sqrt{w_\alpha}.
\end{equation}
Its projection to the tangent space of the unit sphere at $F$ is
\begin{equation}
\label{eq:lem33-spherical-projection}
    \nu_\alpha=n_\alpha-\sqrt{w_\alpha}F.
\end{equation}
Since every $F_i$ is a linear combination of the angular vectors $m_\beta$,
we also have $\langle\nu_\alpha,F_i\rangle=0$. Thus each $\nu_\alpha$ is a
normal vector to the surface inside the sphere.
If
\[
    c_{ij,\alpha}=\sqrt{w_\alpha}(\delta_{ij}-k_{\alpha i}k_{\alpha j}),
\]
then, by the same computation as in \eqref{eq:lem33-orthogonal-to-radius},
\begin{equation}
\label{eq:lem33-cancellation-hij}
    \sum_{\alpha=1}^m c_{ij,\alpha}\sqrt{w_\alpha}=0.
\end{equation}
Therefore the radial expression and the projected spherical expression agree:
\begin{equation}
\label{eq:lem33-radial-projected-agree}
    \sum_{\alpha=1}^m c_{ij,\alpha}n_\alpha
    =\sum_{\alpha=1}^m c_{ij,\alpha}\nu_\alpha.
\end{equation}
Moreover, from \eqref{eq:lem33-spherical-projection} one computes
\begin{equation}
\label{eq:lem33-projected-inner-basic}
    \langle\nu_\alpha,\nu_\beta\rangle
    =\delta_{\alpha\beta}-\sqrt{w_\alpha w_\beta}.
\end{equation}
Consequently, if two collections of coefficients $c_\alpha$ and $d_\alpha$
satisfy
\[
    \sum_{\alpha=1}^m c_\alpha\sqrt{w_\alpha}=0,
    \qquad
    \sum_{\alpha=1}^m d_\alpha\sqrt{w_\alpha}=0,
\]
then \eqref{eq:lem33-projected-inner-basic} gives
\begin{equation}
\label{eq:projected-inner-products}
    \left\langle
    \sum_{\alpha=1}^m c_\alpha\nu_\alpha,
    \sum_{\beta=1}^m d_\beta\nu_\beta
    \right\rangle
    =\sum_{\alpha=1}^m c_\alpha d_\alpha.
\end{equation}
Thus, for the coefficient vectors satisfying the cancellation condition, inner
products can be computed as if the projected vectors $\nu_\alpha$ were the
orthonormal radial vectors $n_\alpha$.

We now compute the two normal vectors which determine the nonzero part of Lu's
fundamental matrix. Using \eqref{eq:lem33-rescaled-frequency} in
\eqref{eq:hij-radial-general}, we obtain
\begin{equation}
\label{eq:lem33-h-components}
\begin{aligned}
    h_{11}
    &=-\sum_{\alpha=1}^m
    \sqrt{w_\alpha}\cos(2\theta_\alpha)\,\nu_\alpha,\\
    h_{12}
    &=-\sum_{\alpha=1}^m
    \sqrt{w_\alpha}\sin(2\theta_\alpha)\,\nu_\alpha,\\
    h_{22}&=-h_{11}.
\end{aligned}
\end{equation}
For example, the coefficient of $\nu_\alpha$ in $h_{11}$ is
$\sqrt{w_\alpha}(1-2\cos^2\theta_\alpha)=-\sqrt{w_\alpha}\cos(2\theta_\alpha)$,
and the coefficient in $h_{12}$ is
$-2\sqrt{w_\alpha}\cos\theta_\alpha\sin\theta_\alpha
=-\sqrt{w_\alpha}\sin(2\theta_\alpha)$.
The identities in \eqref{eq:general-isometry} are equivalent to
\begin{equation}
\label{eq:lem33-double-angle-balance}
    \sum_{\alpha=1}^m w_\alpha\cos(2\theta_\alpha)=0,
    \qquad
    \sum_{\alpha=1}^m w_\alpha\sin(2\theta_\alpha)=0.
\end{equation}
Thus the coefficient collections occurring in $h_{11}$ and $h_{12}$ satisfy the
cancellation hypothesis in \eqref{eq:projected-inner-products}. Applying
\eqref{eq:projected-inner-products} to \eqref{eq:lem33-h-components} gives
\begin{equation}
\label{eq:lem33-gram-entries}
\begin{aligned}
    |h_{11}|^2
    &=\sum_{\alpha=1}^m w_\alpha\cos^2(2\theta_\alpha),\\
    |h_{12}|^2
    &=\sum_{\alpha=1}^m w_\alpha\sin^2(2\theta_\alpha),\\
    \langle h_{11},h_{12}\rangle
    &=\sum_{\alpha=1}^m
    w_\alpha\cos(2\theta_\alpha)\sin(2\theta_\alpha).
\end{aligned}
\end{equation}
In particular,
\begin{equation}
\label{eq:trace-gram-one}
    |h_{11}|^2+|h_{12}|^2=\sum_{\alpha=1}^m w_\alpha=1.
\end{equation}
This also gives
\begin{equation}
\label{eq:lem33-S-equals-two}
    S=2|h_{11}|^2+2|h_{12}|^2=2.
\end{equation}

It remains to translate this Gram-matrix computation into the eigenvalues of
Lu's fundamental matrix. The minimality identity $h_{22}=-h_{11}$ shows that
the two normal vectors $h_{11}$ and $h_{12}$ determine the whole second
fundamental form. Applying the rank-two formula
\eqref{eq:fundamental-matrix} with these two vectors, the two possible nonzero eigenvalues
of $\mathcal A$ are precisely the eigenvalues of $2G$, where
\begin{equation}
\label{eq:lem33-two-by-two-operator}
    \qquad
    G=
    \begin{pmatrix}
    |h_{11}|^2 & \langle h_{11},h_{12}\rangle\\
    \langle h_{11},h_{12}\rangle & |h_{12}|^2
    \end{pmatrix}.
\end{equation}
All remaining eigenvalues are zero. This rank-two reduction is independent
of the choice of normal frame and automatically accounts for every normal
direction.

The entries of $G$ are controlled by the complex number $R$ in
\eqref{eq:R-def-general}. From \eqref{eq:lem33-gram-entries},
\begin{equation}
\label{eq:lem33-R-components}
\begin{aligned}
    |h_{11}|^2-|h_{12}|^2
    &=\sum_{\alpha=1}^m w_\alpha\cos(4\theta_\alpha)
    =\operatorname{Re}R,\\
    2\langle h_{11},h_{12}\rangle
    &=\sum_{\alpha=1}^m w_\alpha\sin(4\theta_\alpha)
    =\operatorname{Im}R.
\end{aligned}
\end{equation}
Together with \eqref{eq:trace-gram-one}, this gives the two eigenvalues of $G$:
\begin{equation}
\label{eq:lem33-G-eigenvalues}
    \frac{1+|R|}{2},
    \qquad
    \frac{1-|R|}{2}.
\end{equation}
Since $R$ is a convex combination of unit complex numbers, $|R|\le1$; hence
the two numbers in \eqref{eq:lem33-G-eigenvalues} are nonnegative and are
listed in decreasing order.
Since Lu's fundamental matrix has its possible nonzero eigenvalues equal to the
eigenvalues of $2G$ by \eqref{eq:lem33-two-by-two-operator}, we obtain
\eqref{eq:general-lambda}. Combining \eqref{eq:lem33-S-equals-two} with
\eqref{eq:general-lambda} gives \eqref{eq:general-Lu-value}.

Finally, the normal scalar curvature identity recalled in the preliminaries gives
\begin{equation}
\label{eq:lem33-rho0-computation}
    \rho_0^\perp
    =4\lambda_1\lambda_2
    =4(1+|R|)(1-|R|)
    =4(1-|R|^2),
\end{equation}
which is \eqref{eq:general-rho}. With the normalized convention
\eqref{eq:rho-normalized} for surfaces, this is equivalently
\[
    \rho^\perp=\sqrt{1-|R|^2}.
\]
The proof is complete.
\end{proof}

\subsection{The explicit codimension-three family}
\label{sec:explicit-q3}

We now record the codimension-three block used below in the density theorem and
in the non-fullness argument. Compared with the general construction,
this part gives completely explicit formulas. The construction is first written on a convenient finite-covering torus;
passing to the maximal frequency quotient gives the embedded torus with the
same local invariants.

\begin{theorem}[Explicit codimension-three finite-covering family]
\label{thm:explicit-q3}
For every pair of integers $N\ge2$ and $1\le\ell<N$, there is a closed linearly
full flat minimal immersion on an explicitly chosen finite-covering torus
\[
    F_{N,\ell}:T_{N,\ell}\longrightarrow\Sph^5
\]
with
\[
    S\equiv2,
    \qquad
    \lambda_1\equiv 2-\frac{\ell^2}{N^2},
    \qquad
    \lambda_2\equiv \frac{\ell^2}{N^2}.
\]
Thus
\[
    S+\lambda_2\equiv2+\frac{\ell^2}{N^2}.
\]
Moreover, $F_{N,\ell}$ factors through its maximal frequency quotient as a
closed linearly full flat minimal embedding
\[
    \widehat F_{N,\ell}:\widehat T_{N,\ell}^2\longrightarrow\Sph^5,
\]
and the same local identities for $S,\lambda_1,\lambda_2$ hold on this embedded
quotient. As $\ell/N$ ranges over rational numbers in $(0,1)$, the resulting
values contain
\[
    \{2+r^2:\ r\in\Q,\ 0<r<1\},
\]
and hence are dense in $(2,3)$.
\end{theorem}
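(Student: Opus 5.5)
The plan is to take $m=3$ frequency pairs and use the mirror-symmetric configuration
\[
    u_1=(\cos t,\sin t),\qquad u_2=(\cos t,-\sin t),\qquad u_3=(0,1),
\]
so that $\theta_1=t$, $\theta_2=-t$, $\theta_3=\pi/2$. Put $w_1=w_2=w/2$. The balancing condition $\sum_\alpha w_\alpha e^{2i\theta_\alpha}=0$ then reads $w\cos2t-w_3=0$. Together with $w+w_3=1$ this gives
\[
    w=\frac1{1+\cos2t},\qquad w_3=\frac{\cos2t}{1+\cos2t},
\]
and both weights are positive as soon as $\cos2t>0$. Write $c=\cos2t$ and use $\cos4t=2c^2-1$ and $e^{4i\pi/2}=1$ in \eqref{eq:R-def-general}. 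This gives
\[
    R=\frac{2c^2-1+c}{1+c}=2c-1,
\]
which is real. By Lemma~\ref{lem:flat-construction-matrix}, on the branch $c\ge\frac12$ we get $\lambda_2=1-|R|=2-2c=4\sin^2t$. So the target value $\lambda_2=\ell^2/N^2$ is achieved by choosing
\[
    \sin t=\frac{\ell}{2N}\in\Bigl(0,\tfrac12\Bigr).
\]
Then $c=1-\ell^2/(2N^2)\in(\tfrac12,1)$, hence $\lambda_1=2-\ell^2/N^2$, and $S\equiv2$ holds automatically.

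The main obstacle is integrality. The vectors $u_\alpha$ are generally not rescalings of vectors in $\Z^2$ with a common integer length, since $\cos t=\sqrt{4N^2-\ell^2}/(2N)$ is typically irrational. I would therefore not insist on integer frequencies, and instead work with the real frequencies $k_\alpha=u_\alpha$ directly. The key observation is that $k_1-k_2=(0,2\sin t)=\frac{\ell}{N}k_3$, so $k_3=\frac{N}{\ell}(k_1-k_2)$ lies in the $\Q$-span of $k_1,k_2$. Consequently the integer span $\mathcal L=\langle k_1,k_2,k_3\rangle_\Z$ is contained in $\frac1\ell\langle k_1,k_2\rangle_\Z$, and it is a rank-two lattice because $k_1,k_2$ are independent. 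This is consistent with Bryant's rationality criterion. For the finite-covering torus $T_{N,\ell}$ I would take $\R^2/\Gamma$ with $\Gamma=2\pi B^{-T}\Z^2$, where $B$ has columns $k_1/\ell,\,k_2/\ell$. Equivalently, writing $x$ in the basis dual to these columns turns all three phases into integer linear forms, and $\Gamma$ has finite index in $\Lambda_{\mathcal L}$. The quotient torus gets the metric $\frac{r^2}{2}|dx|^2$ with $r=|k_\alpha|=1$, after rescaling to match \eqref{eq:flat-domain-metric}.

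With this in place, the proof of Lemma~\ref{lem:general-eigenmap} goes through verbatim. Isometry comes from \eqref{eq:general-isometry}, minimality from Takahashi's theorem because all $|k_\alpha|$ are equal, and $S\equiv2$ from the Gauss equation. Linear fullness follows by character orthogonality on $\R^2/\Gamma$, since $\pm k_1,\pm k_2,\pm k_3$ are pairwise distinct. The computation of Lemma~\ref{lem:flat-construction-matrix} is purely local in orthonormal coordinates and only uses common length, so it applies unchanged and yields the stated $\lambda_1,\lambda_2$.

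Proposition~\ref{prop:embedded-quotient} then shows that $F_{N,\ell}$ factors through the finite covering $\R^2/\Gamma\to\R^2/\Lambda_{\mathcal L}$ as an injective immersion, hence an embedding $\widehat F_{N,\ell}$ with the same local invariants. Linear fullness is inherited, since the image is unchanged. Finally, as $\ell/N$ ranges over $\Q\cap(0,1)$, the values $2+\ell^2/N^2$ exhaust $\{2+r^2: r\in\Q,\ 0<r<1\}$. This set is dense in $(2,3)$ because $r\mapsto 2+r^2$ is a homeomorphism $(0,1)\to(2,3)$ and $\Q\cap(0,1)$ is dense in $(0,1)$.
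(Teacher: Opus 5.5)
Your proposal is correct and is essentially the paper's proof. With $t=\gamma$, your mirror-symmetric triple and weights are exactly the paper's configuration $\theta=0,\ \pi/2+\gamma,\ 2\gamma$ rotated by $-\gamma$ (frequencies scaled by $1/\sqrt2$), and your relation $k_3=\frac N\ell(k_1-k_2)$ is the paper's $k_3-k_1=\ell q=\frac{\ell}{N}k_2$ after relabeling; the only difference is your covering lattice (dual basis $k_1/\ell,\,k_2/\ell$ rather than the paper's $p,q$ with $k_1=Np,\ k_2=Nq$), which is harmless for this theorem but would change the explicit area formulas in Proposition~\ref{prop:area-growth-explicit}.
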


\begin{proof}
Fix integers
\[
    N\ge2,
    \qquad
    1\le \ell<N.
\]
Choose
\[
    p=\left(\frac{\sqrt2}{N},0\right),
    \qquad
    q=\left(-\frac{\ell}{\sqrt2 N^2},
    \frac{\sqrt{4N^2-\ell^2}}{\sqrt2 N^2}\right).
\]
Then
\[
    |p|^2=|q|^2=\frac2{N^2},
    \qquad
    \ip{p}{q}=-\frac{\ell}{N^3}.
\]
We record the Gram matrix of the frequency basis $p,q$ in order to verify
Bryant's rationality criterion for the frequency lattice generated by these two
vectors. This Gram matrix is
\[
    G_{N,\ell}^*
    =
    \begin{pmatrix}
    \ip{p}{p} & \ip{p}{q}\\
    \ip{q}{p} & \ip{q}{q}
    \end{pmatrix}
    =\begin{pmatrix}
    2/N^2 & -\ell/N^3\\
    -\ell/N^3 & 2/N^2
    \end{pmatrix}
    \in M_2(\Q).
\]
In Bryant's notation, if $\alpha=p_1+ip_2$ and $\beta=q_1+iq_2$, then the two
rationality conditions can be read directly from the entries of this Gram
matrix. Indeed,
\[
    \frac{|\alpha|^2}{|\beta|^2}
    =
    \frac{|p|^2}{|q|^2}
    =1\in\Q,
\]
and
\[
    \operatorname{Re}\left(\frac{\alpha}{\beta}\right)
    =
    \frac{\operatorname{Re}(\alpha\overline{\beta})}{|\beta|^2}
    =
    \frac{\ip{p}{q}}{|q|^2}
    =
    \frac{-\ell/N^3}{2/N^2}
    =
    -\frac{\ell}{2N}
    \in\Q.
\]
Thus the frequency lattice generated by $p$ and $q$ satisfies Bryant's
rationality criterion. We now define the corresponding period lattice. Let
\[
    \Lambda_{N,\ell}=\{x\in\R^2:\ip{x}{p},\ip{x}{q}\in2\pi\Z\},
    \qquad
    T_{N,\ell}=\R^2/\Lambda_{N,\ell}.
\]
Let $B=(p\ \ q)$ be the $2\times2$ matrix whose columns are $p$ and $q$.
Since $p$ and $q$ are linearly independent, $B$ is invertible. Moreover, for
$x\in\R^2$,
\[
    B^T x=
    \begin{pmatrix}
        \ip{p}{x}\\
        \ip{q}{x}
    \end{pmatrix}.
\]
It follows directly from the definition of $\Lambda_{N,\ell}$ that
\[
\begin{aligned}
    x\in\Lambda_{N,\ell}
    &\iff B^T x\in2\pi\Z^2\\
    &\iff x\in2\pi(B^T)^{-1}\Z^2.
\end{aligned}
\]
Writing
\[
    B^{-T}:=(B^T)^{-1}=(B^{-1})^T,
\]
we therefore obtain
\begin{equation}
\label{eq:dual-lattice-basis}
\begin{aligned}
    \Lambda_{N,\ell}
    &=2\pi B^{-T}\Z^2
    =\left\{2\pi B^{-T}
    \begin{pmatrix}n_1\\ n_2\end{pmatrix}:n_1,n_2\in\Z\right\}.
\end{aligned}
\end{equation}
In particular, the two columns of $2\pi B^{-T}$ form a $\Z$-basis of the
period lattice $\Lambda_{N,\ell}$.

We next identify its $2\pi$-dual lattice. If
$\lambda=2\pi B^{-T}n$ with $n\in\Z^2$, then, for every $\xi\in\R^2$,
\[
    \ip{\xi}{\lambda}
    =2\pi\ip{B^{-1}\xi}{n}.
\]
Hence $\ip{\xi}{\lambda}\in2\pi\Z$ for every
$\lambda\in\Lambda_{N,\ell}$ if and only if
$B^{-1}\xi\in\Z^2$. Consequently,
\[
    \Lambda_{N,\ell}^*
    :=\{\xi\in\R^2:\ip{\xi}{\lambda}\in2\pi\Z
    \text{ for every }\lambda\in\Lambda_{N,\ell}\}
    =B\Z^2=\Z p\oplus\Z q.
\]
Thus $p$ and $q$ form a $\Z$-basis of $\Lambda_{N,\ell}^*$ in the
$2\pi$ normalization. Equivalently, a vector $\xi$ gives a well-defined
character $e^{i\ip{\xi}{x}}$ on $T_{N,\ell}$ if and only if
$\xi\in\Lambda_{N,\ell}^*$. Hence every integer linear combination of $p$
and $q$ gives a well-defined character on $T_{N,\ell}$.

Define
\[
    k_1=Np,
    \qquad
    k_2=Nq,
    \qquad
    k_3=Np+\ell q.
\]
Then
\[
    |k_1|^2=|k_2|^2=|k_3|^2=2.
\]
Indeed,
\[
    |k_3|^2=N^2|p|^2+2N\ell\ip{p}{q}+\ell^2|q|^2
    =2-\frac{2\ell^2}{N^2}+\frac{2\ell^2}{N^2}=2.
\]
Put
\begin{equation}
\label{eq:weights}
    w_1=w_3=\frac{N^2}{4N^2-\ell^2},
    \qquad
    w_2=\frac{2N^2-\ell^2}{4N^2-\ell^2}.
\end{equation}
To verify the metric identity without a lengthy polynomial expansion, introduce
\[
    \varepsilon=\frac{\ell}{2N},
    \qquad
    \zeta=\frac{\sqrt{4N^2-\ell^2}}{2N}.
\]
Then
\[
    \varepsilon^2+\zeta^2=1,
    \qquad 0<\varepsilon<\frac12,
\]
and the unit directions $u_\alpha=k_\alpha/\sqrt2$ are
\[
    u_1=(1,0),\qquad
    u_2=(-\varepsilon,\zeta),\qquad
    u_3=(\zeta^2-\varepsilon^2,2\varepsilon\zeta).
\]
The weights in \eqref{eq:weights} become
\[
    w_1=w_3=\frac1{4\zeta^2},
    \qquad
    w_2=\frac{\zeta^2-\varepsilon^2}{2\zeta^2}.
\]
Since
$\zeta^2-\varepsilon^2=1-2\varepsilon^2>0$, all three weights are positive.
Moreover,
\[
    w_1+w_2+w_3
    =\frac{1+\zeta^2-\varepsilon^2}{2\zeta^2}=1.
\]
Let
\[
    M=\sum_{\alpha=1}^3w_\alpha u_\alpha\otimes u_\alpha.
\]
The $(2,2)$ and $(1,2)$ entries are
\[
\begin{aligned}
    M_{22}
    &=w_2\zeta^2+4w_3\varepsilon^2\zeta^2
      =\frac{\zeta^2-\varepsilon^2}{2}+\varepsilon^2
      =\frac12,\\
    M_{12}
    &=-w_2\varepsilon\zeta
      +2w_3\varepsilon\zeta(\zeta^2-\varepsilon^2)
      =0.
\end{aligned}
\]
Moreover,
\[
    \operatorname{Tr}M=\sum_{\alpha=1}^3w_\alpha|u_\alpha|^2
    =\sum_{\alpha=1}^3w_\alpha=1,
\]
so $M_{11}=1-M_{22}=1/2$. Hence
\[
    \sum_{\alpha=1}^3w_\alpha u_\alpha\otimes u_\alpha=\frac12I_2.
\]
Since $k_\alpha=\sqrt2u_\alpha$, this is equivalent to
\begin{equation}
\label{eq:isometry-condition}
    \sum_{\alpha=1}^3w_\alpha k_\alpha\otimes k_\alpha=I_2.
\end{equation}
Equivalently, \eqref{eq:isometry-condition} says that the induced metric of the
map below is the Euclidean metric $dx_1^2+dx_2^2$.
Thus the map
\[
\begin{aligned}
F_{N,\ell}(x)=(&\sqrt{w_1}\cos\ip{k_1}{x},
\sqrt{w_1}\sin\ip{k_1}{x},
\sqrt{w_2}\cos\ip{k_2}{x}, \\
&\sqrt{w_2}\sin\ip{k_2}{x},
\sqrt{w_3}\cos\ip{k_3}{x},
\sqrt{w_3}\sin\ip{k_3}{x})
\end{aligned}
\]
is well-defined on $T_{N,\ell}$, because $k_1,k_2,k_3\in\Lambda_{N,\ell}^*$.
Moreover $|F_{N,\ell}|^2=w_1+w_2+w_3=1$, and
\eqref{eq:isometry-condition} gives
\[
    F_{N,\ell}^*g_{\Sph^5}=dx_1^2+dx_2^2.
\]
Since $|k_1|^2=|k_2|^2=|k_3|^2=2$, each coordinate function is a $2$-eigenfunction
for the positive operator $-\Delta$; equivalently, with our convention,
\[
    \Delta F_{N,\ell}+2F_{N,\ell}=0.
\]
Takahashi's theorem therefore gives minimality. The immersion is linearly full.
Indeed, the three frequencies are
\[
    k_1=Np,\qquad k_2=Nq,\qquad k_3=Np+\ell q.
\]
Thus, when they are written in the basis $p,q$ of the dual lattice
$\Lambda_{N,\ell}^*$, their coefficient vectors are
\[
    (N,0),\qquad (0,N),\qquad (N,\ell).
\]
Since $0<\ell<N$, these coefficient vectors are pairwise distinct and none is
the negative of another. Hence the frequencies $k_1,k_2,k_3$ are pairwise
distinct up to sign. Distinct characters of a compact torus are orthogonal in
$L^2$. Therefore the six real functions
\[
    \cos\ip{k_\alpha}{x},\quad \sin\ip{k_\alpha}{x},
    \qquad \alpha=1,2,3,
\]
are linearly independent. Since all weights are positive, no coordinate pair is
absent. Hence the image is not contained in any proper linear subspace of
$\mathbb R^6$, and the immersion is linearly full in $\Sph^5$. By
Proposition~\ref{prop:embedded-quotient}, the same image is obtained from an embedded flat
minimal torus after quotienting by the full period lattice generated by
$k_1,k_2,k_3$. The local invariants computed below are unchanged by this finite
quotient, and linear fullness is preserved because the image is unchanged.

We now compute the complex number $R_{N,\ell}$ from
Lemma~\ref{lem:flat-construction-matrix}. Choose
$\gamma\in(0,\pi/6)$ such that
\[
    \varepsilon=\sin\gamma,
    \qquad
    \zeta=\cos\gamma.
\]
The three unit directions above have angles
\[
    \theta_1=0,
    \qquad
    \theta_2=\frac\pi2+\gamma,
    \qquad
    \theta_3=2\gamma.
\]
Indeed,
$u_2=(-\sin\gamma,\cos\gamma)$ and
$u_3=(\cos2\gamma,\sin2\gamma)$. Since
$e^{4i(\pi/2+\gamma)}=e^{4i\gamma}$ and $w_1=w_3$, we obtain
\begin{align*}
    R_{N,\ell}
    &=w_1+w_2e^{4i\gamma}+w_3e^{8i\gamma}\\
    &=e^{4i\gamma}\bigl(w_2+2w_1\cos4\gamma\bigr)\\
    &=e^{4i\gamma}
      \frac{\zeta^2-\varepsilon^2+\cos4\gamma}{2\zeta^2}.
\end{align*}
Using $\cos4\gamma=1-8\varepsilon^2\zeta^2$ and
$\zeta^2=1-\varepsilon^2$, the real factor simplifies to
\[
    \frac{\zeta^2-\varepsilon^2+\cos4\gamma}{2\zeta^2}
    =1-4\varepsilon^2
    =1-\frac{\ell^2}{N^2}.
\]
This factor is positive because $0<\ell<N$. Therefore
\begin{equation}
\label{eq:R-abs}
    |R_{N,\ell}|=1-\frac{\ell^2}{N^2}.
\end{equation}
Consequently
\[
    \lambda_1=2-\frac{\ell^2}{N^2},
    \qquad
    \lambda_2=\frac{\ell^2}{N^2}.
\]
Since $S\equiv2$, this gives
\begin{equation}
\label{eq:Lu-value}
    S+\lambda_2=2+\frac{\ell^2}{N^2}.
\end{equation}
Finally,
\[
    \rho_0^\perp=4\lambda_1\lambda_2
    =4\frac{\ell^2}{N^2}\left(2-\frac{\ell^2}{N^2}\right).
\]
By \eqref{eq:rho-normalized}, one has

\[
    \rho^\perp=\frac{\ell}{N}\sqrt{2-\frac{\ell^2}{N^2}}.
\]
For fixed $\ell$, these quantities satisfy
\[
    S+\lambda_2\downarrow2,
    \qquad
    \rho_0^\perp\downarrow0,
    \qquad
    N\to\infty.
\]
Moreover, because $\ell/N$ can be any rational number in $(0,1)$, the realized values $2+\ell^2/N^2$ contain $\{2+r^2:\ r\in\Q,\ 0<r<1\}$ and are dense in $(2,3)$.
The embedded quotient construction and the preservation of the local invariants were established
above using Proposition~\ref{prop:embedded-quotient}; this completes the proof.
\end{proof}

\begin{proposition}[Area growth of the explicit family]
\label{prop:area-growth-explicit}
For the explicit finite-covering tori $T_{N,\ell}$ constructed above,
\begin{equation}
\label{eq:area}
    \Area(T_{N,\ell})
    =\frac{4\pi^2N^3}{\sqrt{4N^2-\ell^2}}.
\end{equation}
Put
\[
    d:=\gcd(N,\ell),
\]
where $\gcd$ denotes the greatest common divisor. Then the covering degree from $T_{N,\ell}$ to the maximal embedded frequency quotient
$\widehat T_{N,\ell}$ is $Nd$, and hence
\begin{equation}
\label{eq:embedded-quotient-area}
    \Area(\widehat T_{N,\ell})
    =\frac{4\pi^2N^2}{d\sqrt{4N^2-\ell^2}}.
\end{equation}
Consequently, for each fixed $\ell$,
\[
    \Area(T_{N,\ell})\sim 2\pi^2N^2
    \qquad\text{and}\qquad
    (S+\lambda_2-2)\Area(T_{N,\ell})\to 2\pi^2\ell^2
\]
as $N\to\infty$. Thus the reciprocal-area scale asserted here is measured on
the explicitly parametrized finite-covering torus. On the maximal embedded
quotient the local quantities $S$, $\lambda_1$, and $\lambda_2$ are unchanged,
but the area is smaller by the covering degree $Nd$.
\end{proposition}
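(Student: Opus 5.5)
The plan is to compute the area of $T_{N,\ell}$ as the covolume of its period lattice in the Euclidean metric, and then to read off the covering degree as a lattice index via duality.

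\emph{Area of $T_{N,\ell}$.} By the proof of Theorem~\ref{thm:explicit-q3}, $F_{N,\ell}^*g_{\Sph^5}=dx_1^2+dx_2^2$. Hence $\Area(T_{N,\ell})$ is the Euclidean covolume of $\Lambda_{N,\ell}=2\pi B^{-T}\Z^2$, which equals $(2\pi)^2/|\det B|$, where $B=(p\ \ q)$. From the explicit coordinates of $p$ and $q$,
\[
    \det B=\frac{\sqrt2}{N}\cdot\frac{\sqrt{4N^2-\ell^2}}{\sqrt2N^2}=\frac{\sqrt{4N^2-\ell^2}}{N^3}.
\]
This gives \eqref{eq:area}.

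\emph{Covering degree.} By Proposition~\ref{prop:embedded-quotient}, the embedded quotient is $\R^2/\Lambda_{\mathcal L}$, where $\mathcal L=\langle k_1,k_2,k_3\rangle_\Z$. Since $\Lambda_{N,\ell}^*=\Z p\oplus\Z q$, I will identify it with $\Z^2$ through the basis $p,q$. Under this identification $\mathcal L$ becomes the subgroup generated by $(N,0)$, $(0,N)$ and $(N,\ell)$. Subtracting $(N,0)$ from $(N,\ell)$ shows it is generated by $(N,0)$, $(0,N)$ and $(0,\ell)$, that is, by $(N,0)$ and $(0,d)$ with $d=\gcd(N,\ell)$. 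Its index in $\Z^2$ is therefore $Nd$.

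Passing to $2\pi$-duals reverses inclusions and preserves indices: $\Lambda_{N,\ell}\subset\Lambda_{\mathcal L}$ and
\[
    [\Lambda_{\mathcal L}:\Lambda_{N,\ell}]=[\Lambda_{N,\ell}^*:\mathcal L]=Nd.
\]
I would check this with determinants: covolumes of dual lattices are reciprocal up to the factor $(2\pi)^2$. So $T_{N,\ell}\to\widehat T_{N,\ell}$ is an $Nd$-sheeted covering, and the quotient area is the area of $T_{N,\ell}$ divided by $Nd$, which is \eqref{eq:embedded-quotient-area}. This index computation is the only step needing care, namely getting the direction of the duality right. The determinant check settles it: $|\det|$ of the basis of $\mathcal L$ is $Nd\,|\det B|$, so the covolume of $\Lambda_{\mathcal L}$ is $(2\pi)^2/(Nd\,|\det B|)$.

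\emph{Asymptotics.} For fixed $\ell$, $\sqrt{4N^2-\ell^2}\sim 2N$, so $\Area(T_{N,\ell})\sim 2\pi^2N^2$. Combining this with $S+\lambda_2-2=\ell^2/N^2$ from \eqref{eq:Lu-value} gives the limit $2\pi^2\ell^2$. The local invariants $S$, $\lambda_1$, $\lambda_2$ are unchanged on $\widehat T_{N,\ell}$, as already noted in Theorem~\ref{thm:explicit-q3}.
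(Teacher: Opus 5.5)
Your proposal is correct and follows essentially the same route as the paper. You compute the area as the covolume $(2\pi)^2/|\det B|$, reduce $\mathcal L$ to $\langle Np,dq\rangle_{\Z}$ via B\'ezout, and get the index $Nd$ of $\Lambda_{N,\ell}$ in the full period lattice. The only difference is presentation: the paper writes the full period lattice explicitly as $2\pi B^{-T}D^{-1}\Z^2$ with $D$ diagonal with entries $N,d$, whereas your covolume check performs the same computation implicitly.
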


\begin{proof}
The covolume of the lattice generated by the dual frequency basis $p,q$ is
\[
    |\det(p,q)|
    =\frac{\sqrt2}{N}\,
      \frac{\sqrt{4N^2-\ell^2}}{\sqrt2N^2}
    =\frac{\sqrt{4N^2-\ell^2}}{N^3}.
\]
Here $\det(p,q)$ denotes the determinant of the $2\times2$ matrix whose columns
are $p$ and $q$. Since $\Lambda_{N,\ell}=2\pi B^{-T}\Z^2$, where $B$ has
columns $p,q$, its covolume is $(2\pi)^2/|\det B|$. The induced metric on
$T_{N,\ell}$ is Euclidean, so its area equals this covolume. Thus
\[
    \Area(T_{N,\ell})
    =\frac{(2\pi)^2}{|\det(p,q)|}
    =\frac{4\pi^2N^3}{\sqrt{4N^2-\ell^2}}.
\]
The full frequency lattice generated by $k_1,k_2,k_3$ is
\[
\begin{aligned}
    \mathcal L_{N,\ell}
    &=\langle Np,Nq,Np+\ell q\rangle_{\mathbb Z}\\
    &=\langle Np,Nq,\ell q\rangle_{\mathbb Z}
      =\langle Np,dq\rangle_{\mathbb Z},
      \qquad d:=\gcd(N,\ell).
\end{aligned}
\]
Here $d$ is the greatest common divisor of $N$ and $\ell$. The last equality
follows from B\'ezout's identity: $d$ is an integral linear combination of $N$
and $\ell$, while $N$ and $\ell$ are both multiples of $d$.
Relative to the basis $p,q$ of $\Lambda_{N,\ell}^*$, this says that the
coefficient lattice of $\mathcal L_{N,\ell}$ is
\[
    N\Z\oplus d\Z\subset\Z^2.
\]
To see the corresponding period lattice and its index explicitly, let
\[
    D=\begin{pmatrix}N&0\\0&d\end{pmatrix}.
\]
Since the columns of $B$ are $p,q$, the matrix $BD$ has columns $Np$ and $dq$;
hence
\[
    \mathcal L_{N,\ell}=BD\Z^2.
\]
By the same period-lattice convention used in \eqref{eq:dual-lattice-basis}, the
full period lattice of the three frequencies is
\[
\begin{aligned}
    \widehat\Lambda_{N,\ell}
    &=2\pi(BD)^{-T}\Z^2\\
    &=2\pi B^{-T}D^{-T}\Z^2.
\end{aligned}
\]
Here
\[
    D^{-T}=D^{-1}=\begin{pmatrix}1/N&0\\0&1/d\end{pmatrix},
    \qquad
    D^{-T}\Z^2=\frac1N\Z\oplus\frac1d\Z.
\]
By contrast, \eqref{eq:dual-lattice-basis} gives
$\Lambda_{N,\ell}=2\pi B^{-T}\Z^2$. Since
\[
    \Z^2\subset D^{-T}\Z^2
\]
with index $Nd$, multiplication by the invertible matrix $2\pi B^{-T}$ gives
\[
    \Lambda_{N,\ell}\subset\widehat\Lambda_{N,\ell},
    \qquad
    [\widehat\Lambda_{N,\ell}:\Lambda_{N,\ell}]=Nd.
\]
Thus the quotient map
$\R^2/\Lambda_{N,\ell}\to\R^2/\widehat\Lambda_{N,\ell}$ has degree $Nd$.
It divides the area by $Nd$ and gives \eqref{eq:embedded-quotient-area}. For
fixed $\ell$, \eqref{eq:area} gives
\[
    \Area(T_{N,\ell})\sim 2\pi^2N^2.
\]
Together with \eqref{eq:Lu-value}, namely
\[
    S+\lambda_2-2=\frac{\ell^2}{N^2},
\]
we obtain
\begin{equation}
\label{eq:sharp-scale}
    (S+\lambda_2-2)\Area(T_{N,\ell})
    =\frac{4\pi^2\ell^2N}{\sqrt{4N^2-\ell^2}}
    \longrightarrow 2\pi^2\ell^2.
\end{equation}
This proves the proposition.
\end{proof}

\begin{remark}[Area scale for the finite-covering parametrizations]
\label{rem:sharp-finite-cover-scale}
The family in Theorem~\ref{thm:explicit-q3} is written on explicitly chosen
finite-covering domains $T_{N,\ell}$. On these domains,
\[
    \Area(T_{N,\ell})=
    \frac{4\pi^2N^3}{\sqrt{4N^2-\ell^2}}
    \sim 2\pi^2N^2,
\]
and
\[
    \bigl(S+\lambda_2-2\bigr)\Area(T_{N,\ell})
    \to2\pi^2\ell^2.
\]
Thus the gap $S+\lambda_2-2$ is of order the reciprocal of the area of these
chosen finite-covering domains. If
\[
    d:=\gcd(N,\ell),
\]
where $\gcd$ denotes the greatest common divisor, then the maximal embedded quotient has
\[
    \Area(\widehat T_{N,\ell})
    =\frac{4\pi^2N^2}{d\sqrt{4N^2-\ell^2}},
\]
while the local quantities $S,\lambda_1,\lambda_2$ are unchanged. In particular,
for fixed $\ell$,
\[
    (S+\lambda_2-2)\Area(\widehat T_{N,\ell})
    =\frac{4\pi^2\ell^2}{d\sqrt{4N^2-\ell^2}}
    \longrightarrow0.
\]
Accordingly, the reciprocal-area scale displayed above is a scale measured on
the explicit finite-covering parametrizations; it is not meant as a universal area
law for every embedded quotient of the same image.
\end{remark}

\subsection{Odd-codimensional density and sequences}
\label{sec:odd-density}

We next show that such data exist for every $m\ge3$ and can be chosen so that Lu's quantity approaches the first-gap value. The construction uses rational points on the unit circle. If $t\in\Q$, then
\begin{equation}
\label{eq:rational-circle}
    u(t)=\left(\frac{1-t^2}{1+t^2},\frac{2t}{1+t^2}\right)
\end{equation}
is a rational unit vector. We shall use the following elementary observation
several times. Suppose that $u_1,\ldots,u_m\in\R^2$ are unit vectors with
rational coordinates. Choose a positive integer $L$ such that
$Lu_\alpha\in\Z^2$ for every $\alpha$, and set
\[
    v_\alpha:=Lu_\alpha.
\]
Then
\[
    v_\alpha\in\Z^2,\qquad |v_\alpha|=L
\]
for every $\alpha$, since $|u_\alpha|=1$. Hence all frequencies have the same
Euclidean length. Moreover, because $v_\alpha$ has integer coordinates, the
functions
\[
    e^{i\langle v_\alpha,x\rangle},\qquad
    \cos\langle v_\alpha,x\rangle,\qquad
    \sin\langle v_\alpha,x\rangle
\]
are well-defined on the standard flat torus $(\R/2\pi\Z)^2$.

\begin{proposition}[The general odd-codimensional sequence]
\label{prop:full-odd}
For every $m\ge3$ there exist rational unit directions
\[
    u_{1,j},\ldots,u_{m,j}\in\mathbb S^1\subset\R^2,
    \qquad j=1,2,\ldots,
\]
positive rational weights $w_{1,j},\ldots,w_{m,j}$, and positive integers
$L_j$ such that
\begin{equation}
\label{eq:prop33-conditions}
    \sum_{\alpha=1}^m w_{\alpha,j}=1,
    \qquad
    \sum_{\alpha=1}^m w_{\alpha,j}
    u_{\alpha,j}\otimes u_{\alpha,j}=\frac12 I_2,
\end{equation}
and such that
\[
    v_{\alpha,j}:=L_j u_{\alpha,j}\in\Z^2,
    \qquad |v_{\alpha,j}|=L_j,
    \qquad \alpha=1,\ldots,m.
\]
The directions $u_{1,j},\ldots,u_{m,j}$ may be chosen pairwise distinct up to
sign. Let
\[
    F_j:T_{L_j}^2\longrightarrow\Sph^{2m-1}
\]
    be the immersion of Lemma~\ref{lem:general-eigenmap} determined by these
vectors and weights. Then $F_j$ is a linearly full flat minimal immersion. Moreover,
$F_j$ factors through a linearly full flat minimal embedding on the maximal
frequency quotient,
\[
    \widehat F_j:\widehat T_j^2\longrightarrow\Sph^{2m-1}.
\]
The quotient has the same image as $F_j$, so linear fullness is preserved, and
all local invariants are unchanged. In particular,
\[
    S\equiv2,
    \qquad
    S+\lambda_2>2,
    \qquad
    S+\lambda_2\longrightarrow2.
\]
Consequently, for every odd codimension $q=2m-3\ge3$, there are linearly full
flat minimal embeddings in $\Sph^{2+q}$ that contradict any positive
second-gap assertion in the class of closed embedded flat minimal tori, and
hence also the original closed immersed formulation.
\end{proposition}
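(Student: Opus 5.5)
The construction rests on Lemma~\ref{lem:general-eigenmap}, Proposition~\ref{prop:embedded-quotient} and Lemma~\ref{lem:flat-construction-matrix}. Once rational unit directions and positive weights satisfying \eqref{eq:prop33-conditions} are in hand, the rest is bookkeeping. We take $L_j$ to be a common denominator of the coordinates of the $u_{\alpha,j}$. Lemma~\ref{lem:general-eigenmap} then gives a linearly full flat minimal immersion with $S\equiv2$. Proposition~\ref{prop:embedded-quotient}, applied with $k_\alpha=v_{\alpha,j}$ and $\Gamma=2\pi\Z^2\subset\Lambda_{\mathcal L}$, gives the embedded quotient with the same image, hence again linearly full. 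The integer span of the $v_{\alpha,j}$ has rank two as soon as two of the directions are independent. Lemma~\ref{lem:flat-construction-matrix} gives $S+\lambda_2=3-|R_j|$. It therefore suffices to build the data so that $0<|R_j|<1$ and $|R_j|\to1$.

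For the directions we perturb the $m=3$ configuration of Theorem~\ref{thm:explicit-q3}. There the angles are $0$, $\pi/2+\gamma$ and $2\gamma$, with $\sin\gamma=\ell/(2N)$, and $|R|=1-4\sin^2\gamma$. In the rational parametrization \eqref{eq:rational-circle} we instead take $u_1=u(0)$, $u_2=u(t_2)$ near angle $\pi/2$, and $u_3=u(t_3)$ with small angle. We then add $m-3$ further rational directions near angle $0$ or $\pi/2$, all pairwise distinct up to sign. This concentrates $e^{4i\theta_\alpha}$ near $1$. Since every $4\theta_\alpha$ lies in a small arc around $0$, $|R_j|$ is close to $1$ for any positive weights summing to $1$. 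Moreover $|R_j|<1$ automatically, because the $4\theta_\alpha$ are not all equal modulo $2\pi$ once the directions are distinct and not all in $\{0,\pi/2\}$ modulo $\pi$. The scale of all the angles is a parameter $\delta_j\to0$, for instance $t=1/j$-type choices.

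For the weights we solve the balance condition $\sum_\alpha w_\alpha e^{2i\theta_\alpha}=0$, $\sum_\alpha w_\alpha=1$ with positive rational $w_\alpha$. This is a linear system with rational coefficients in the $m$ unknowns, since $e^{2i\theta}$ of a rational direction is rational. Positivity needs $0$ to lie in the interior of the convex hull of the points $e^{2i\theta_\alpha}$. These points cluster near $1$ (angles near $0$) and near $-1$ (angles near $\pi/2$). We place the extra directions so that the $e^{2i\theta_\alpha}$ lie on both sides of the real axis near each of $\pm1$; then $0$ is interior for small $\delta_j$. The cleanest way is to start from the explicit three-point solution, whose weights in \eqref{eq:weights} are positive rationals. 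We then choose $m-3$ additional rational directions and write the new weights as $(1-s)w^{(3)}+s\,w'$, where $w'$ is any rational solution of the linear system with the extra points included. For small rational $s$ all weights stay positive.

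The main obstacle is arranging positivity and rationality of the weights while keeping $|R_j|\to1$ and the $m$ directions distinct up to sign. I would handle it with this perturbation argument. An interior point of the solution polytope exists because the three-point solution has full support on its points. The affine solution space for $m$ points has dimension $m-3$, so a small rational move inside it keeps all $m$ weights positive. The rational points of this space are dense in it, so rational weights exist. Finally, the bound $|R_j|\ge 1-C\delta_j^2$ follows from the estimate $\operatorname{Re}e^{4i\theta_\alpha}\ge1-8\theta_\alpha'^2$, where $\theta_\alpha'$ is the distance of $\theta_\alpha$ from $\{0,\pi/2\}$ modulo $\pi$.
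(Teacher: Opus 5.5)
Your reduction is the same as the paper's. Once you have rational directions, pairwise distinct up to sign, and positive rational weights with moment $\frac12 I_2$, Lemma~\ref{lem:general-eigenmap}, Proposition~\ref{prop:embedded-quotient} (with $\Gamma=2\pi\Z^2$) and Lemma~\ref{lem:flat-construction-matrix} finish the job. Your two estimates on $R_j$ are also fine:
\begin{itemize}
\item $|R_j|<1$ holds because at most two directions that are distinct up to sign can share the value $e^{4i\theta}$ (an orthogonal pair).
\item $|R_j|\ge\min_\alpha\cos 4\theta'_\alpha\to1$.
\end{itemize}
The real difference is how you produce the weights. The paper never solves a linear system. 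It uses building blocks whose moment is exactly $\frac12I_2$:
\begin{itemize}
\item orthogonal pairs $u,u^\perp$ with equal weights, using $u\otimes u+u^\perp\otimes u^\perp=I_2$;
\item for odd $m$ only, the symmetric block $(1,0),(\pm\varepsilon_j,\zeta_j)$ with weights $\mu_{0,j},\mu_{\pm,j}$.
\end{itemize}
It then takes convex combinations of blocks. This forces a parity split, but it gives closed-form rational weights and a closed-form $R_j$, e.g.\ $R_j^{(3)}=1-4\varepsilon_j^2$. You instead use one three-point block for every $m\ge3$ and absorb $m-3$ arbitrary extra directions by convexity:
\begin{itemize}
\item Three distinct points $e^{2i\theta_\alpha}$ on the circle are affinely independent.
\item So positive block weights put $0$ in the interior of their triangle, hence in the interior of the convex hull of all $m$ points.
\item A strictly positive solution therefore exists.
\item Since $e^{2i\theta}=(u_1+iu_2)^2$ is rational, the solution set is a rational affine subspace, and its rational points are dense in its positive part.
\end{itemize}
Your route removes the parity split and shows the extra directions are essentially arbitrary, at the cost of explicit weights and an explicit $R_j$.

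Two steps need tightening.

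First, the claim that $w'$ can be \emph{any} rational solution and small $s$ keeps all weights positive is false. On the extra indices the new weights are $s\,w'_\beta$, so you need $w'_\beta>0$. There are two easy fixes:
\begin{itemize}
\item Take $w'$ strictly positive, which your interior-point argument provides; then no mixing is needed at all.
\item Prescribe small equal rational weights $\epsilon$ on the extra directions and solve for the three block weights. The solution is unique by affine independence, rational, and positive for small $\epsilon$, since it tends to $w^{(3)}$.
\end{itemize}
Likewise, ``a small rational move keeps all $m$ weights positive'' must start from a strictly positive solution, not from $w^{(3)}$ padded by zeros.

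Second, the data of Theorem~\ref{thm:explicit-q3} are not rational in general, since $\zeta=\sqrt{4N^2-\ell^2}/(2N)$. Take $(\varepsilon,\zeta)=(\sin\gamma_j,\cos\gamma_j)$ itself to be a rational point of the circle with $\gamma_j\to0$, as the paper does. Then set $u_2=(-\varepsilon,\zeta)$ and $u_3=(\zeta^2-\varepsilon^2,2\varepsilon\zeta)$, so that both the directions and the weights \eqref{eq:weights} are rational. Independently chosen $u(t_2)$ and $u(t_3)$ would not satisfy that weight formula.
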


\begin{proof}
We give the construction separately according to the parity of $m$, the number
of frequency pairs; the codimension $q=2m-3$ is odd throughout. The only
ingredients are rational points on the unit circle and the elementary identity
\[
    u\otimes u+u^\perp\otimes u^\perp=I_2,
    \qquad
    u^\perp=(-u_2,u_1),
\]
valid for every unit vector $u=(u_1,u_2)$.

\smallskip
\noindent\emph{Step 1: the case $m=2r$.}
Choose positive rational numbers $\rho_1,\ldots,\rho_r$ with
$\sum_{\kappa=1}^r\rho_\kappa=1$. For the $j$-th member of the sequence, choose
rational numbers
\[
    0=t_{1,j}<t_{2,j}<\cdots<t_{r,j},
    \qquad t_{\kappa,j}\to0,
\]
so small that all the corresponding angles lie in $[0,\pi/8)$. Set
\[
    u_{\kappa,j}=u(t_{\kappa,j}),
    \qquad
    u_{\kappa+r,j}=u_{\kappa,j}^{\perp},
    \qquad \kappa=1,\ldots,r,
\]
where $u(t)$ is the rational parametrization \eqref{eq:rational-circle}, and put
\[
    w_{\kappa,j}=w_{\kappa+r,j}=\frac{\rho_\kappa}{2}.
\]
The vectors $u_{\kappa,j}$ lie in a small arc around $(1,0)$. With our
convention $u^\perp=(-u_2,u_1)$, the vector $u^\perp$ is obtained from $u$ by a
counterclockwise rotation through $\pi/2$; hence the orthogonal vectors
$u_{\kappa,j}^\perp$ lie in the corresponding small arc around $(0,1)$. The strict
ordering of the parameters therefore makes all $2r$ directions pairwise
distinct up to sign.
Then
\[
    \sum_{\alpha=1}^{2r}w_{\alpha,j}=1,
\]
and, for each $\kappa$,
\[
    \frac{\rho_\kappa}{2}u_{\kappa,j}\otimes u_{\kappa,j}
    +\frac{\rho_\kappa}{2}u_{\kappa,j}^{\perp}\otimes u_{\kappa,j}^{\perp}
    =\frac{\rho_\kappa}{2}I_2.
\]
Summing over $\kappa$ gives the second identity in \eqref{eq:prop33-conditions}.
Since all coordinates of the $u_{\alpha,j}$ are rational, we can choose a common
denominator $L_j$ so that
\[
    v_{\alpha,j}=L_j u_{\alpha,j}\in\Z^2,
    \qquad |v_{\alpha,j}|=L_j.
\]
Thus all assumptions of Lemma~\ref{lem:general-eigenmap} are satisfied. The
pairwise distinctness up to sign gives linear fullness.

\smallskip
\noindent\emph{Step 2: the case $m=2r+1$.}
We first build a three-direction block. Choose rational numbers $\varepsilon_j,\zeta_j>0$ with
\[
    \varepsilon_j^2+\zeta_j^2=1,
    \qquad
    0<\varepsilon_j<\frac12,
    \qquad
    \varepsilon_j\to0.
\]
For example, take a sufficiently small decreasing rational sequence $s_j>0$
and put
\[
    \varepsilon_j=\frac{2s_j}{1+s_j^2},
    \qquad
    \zeta_j=\frac{1-s_j^2}{1+s_j^2},
    \qquad
    s_j\in\Q,
    \qquad s_j\downarrow0.
\]
Define three unit vectors in $\mathbb S^1\subset\R^2$ by
\[
    u_{0,j}=(1,0),
    \qquad
    u_{+,j}=(\varepsilon_j,\zeta_j),
    \qquad
    u_{-,j}=(-\varepsilon_j,\zeta_j),
\]
and define the corresponding scalar weights by
\begin{equation}
\label{eq:three-block-weights}
    \mu_{0,j}=\frac{\zeta_j^2-\varepsilon_j^2}{2\zeta_j^2},
    \qquad
    \mu_{+,j}=\mu_{-,j}=\frac{1}{4\zeta_j^2}.
\end{equation}
Since
$\zeta_j^2-\varepsilon_j^2=1-2\varepsilon_j^2>0$, all three weights are
positive. Their sum is
\[
    \mu_{0,j}+\mu_{+,j}+\mu_{-,j}
    =\frac{\zeta_j^2-\varepsilon_j^2+1}{2\zeta_j^2}=1.
\]
Moreover, the off-diagonal terms cancel by symmetry, while
\[
    \mu_{0,j}+2\mu_{+,j}\varepsilon_j^2=\frac12,
    \qquad
    2\mu_{+,j}\zeta_j^2=\frac12.
\]
Hence
\begin{equation}
\label{eq:three-block-moment}
    \mu_{0,j}u_{0,j}\otimes u_{0,j}
    +\mu_{+,j}u_{+,j}\otimes u_{+,j}
    +\mu_{-,j}u_{-,j}\otimes u_{-,j}
    =\frac12 I_2.
\end{equation}
If $r=1$, set
\[
    (u_{1,j},u_{2,j},u_{3,j})=(u_{0,j},u_{+,j},u_{-,j}),
    \qquad
    (w_{1,j},w_{2,j},w_{3,j})=(\mu_{0,j},\mu_{+,j},\mu_{-,j}).
\]
This gives the required $m=3$ directions and weights.

Assume next that $r\ge2$. Choose positive rational constants
$\eta_1,\ldots,\eta_{r-1}$ and set
\[
    \eta=\sum_{\sigma=1}^{r-1}\eta_\sigma<1.
\]
For each $\sigma$ choose a rational unit vector
\[
    p_{\sigma,j}=u(\tau_{\sigma,j}),
    \qquad \tau_{\sigma,j}\in\Q,
    \qquad \tau_{\sigma,j}\to0,
\]
with the parameters chosen so that all directions in the displayed block and all
added directions $p_{\sigma,j},p_{\sigma,j}^{\perp}$ are pairwise distinct up
to sign. Give the block directions
$u_{0,j},u_{+,j},u_{-,j}$ the respective weights
$(1-\eta)\mu_{0,j},(1-\eta)\mu_{+,j},(1-\eta)\mu_{-,j}$, and give each member
of the pair $p_{\sigma,j},p_{\sigma,j}^{\perp}$ weight $\eta_\sigma/2$.
Relabel the resulting weighted collection
\[
    u_{0,j},u_{+,j},u_{-,j},
    p_{1,j},p_{1,j}^{\perp},\ldots,
    p_{r-1,j},p_{r-1,j}^{\perp}
\]
as $(u_{1,j},w_{1,j}),\ldots,(u_{m,j},w_{m,j})$. Its total weight is
\[
    (1-\eta)+\sum_{\sigma=1}^{r-1}\eta_\sigma=1.
\]
Using $p\otimes p+p^\perp\otimes p^\perp=I_2$ together with
\eqref{eq:three-block-moment}, its weighted moment is
\begin{align*}
    &(1-\eta)
    \left(
    \mu_{0,j}u_{0,j}\otimes u_{0,j}
    +\mu_{+,j}u_{+,j}\otimes u_{+,j}
    +\mu_{-,j}u_{-,j}\otimes u_{-,j}
    \right)  \\
    &\qquad
    +\sum_{\sigma=1}^{r-1}\frac{\eta_\sigma}{2}
    \left(p_{\sigma,j}\otimes p_{\sigma,j}
    +p_{\sigma,j}^{\perp}\otimes p_{\sigma,j}^{\perp}\right) \\
    &=(1-\eta)\frac12 I_2
    +\sum_{\sigma=1}^{r-1}\frac{\eta_\sigma}{2}I_2
    =\frac12 I_2.
\end{align*}

In either case, the resulting $m$ directions are rational and pairwise distinct
up to sign, all weights are positive, and \eqref{eq:prop33-conditions} holds.
Choose a
common positive integer denominator $L_j$ and put
\[
    v_{\alpha,j}=L_j u_{\alpha,j}\in\Z^2,
    \qquad |v_{\alpha,j}|=L_j.
\]
Lemma~\ref{lem:general-eigenmap} therefore gives linearly full flat minimal
immersions with $S\equiv2$.

\smallskip
\noindent\emph{Step 3: the limiting value of $S+\lambda_2$.}
Write
\[
    u_{\alpha,j}=(\cos\theta_{\alpha,j},\sin\theta_{\alpha,j})
\]
and define
\[
    R_j=\sum_{\alpha=1}^m w_{\alpha,j}e^{4i\theta_{\alpha,j}}.
\]
Lemma~\ref{lem:flat-construction-matrix} gives
\[
    \lambda_1=1+|R_j|,
    \qquad
    \lambda_2=1-|R_j|,
    \qquad
    S+\lambda_2=3-|R_j|.
\]
It remains only to record the behavior of $R_j$ for the directions constructed
above.

If $m=2r$, the two vectors in each orthogonal pair give the same complex factor
in $R_j$, because
$e^{4i(\theta+\pi/2)}=e^{4i\theta}$. Therefore
\[
    R_j=\sum_{\kappa=1}^r\rho_\kappa e^{4i\theta_{\kappa,j}}.
\]
Since all $\theta_{\kappa,j}\to0$, we have $R_j\to1$. We also claim that
$|R_j|<1$ for every $j$. Indeed, each number $e^{4i\theta_{\kappa,j}}$ lies on the
unit circle, and the weights $\rho_\kappa$ are positive with
$\sum_{\kappa=1}^r\rho_\kappa=1$. Hence the triangle inequality gives
\[
    |R_j|
    =\left|\sum_{\kappa=1}^r\rho_\kappa e^{4i\theta_{\kappa,j}}\right|
    \le \sum_{\kappa=1}^r\rho_\kappa |e^{4i\theta_{\kappa,j}}|
    =1.
\]
Equality in this inequality could occur only if all the unit complex numbers
$e^{4i\theta_{\kappa,j}}$ point in the same direction. In our construction they are
not all equal: the angles $\theta_{\kappa,j}$ are distinct and lie in
$[0,\pi/8)$, on which $\theta\mapsto e^{4i\theta}$ is injective. Therefore the
inequality is strict, and $|R_j|<1$.

If $m=2r+1$, choose $\gamma_j\in(0,\pi/6)$ so that
\[
    \varepsilon_j=\sin\gamma_j,
    \qquad
    \zeta_j=\cos\gamma_j.
\]
Then $u_{+,j}$ and $u_{-,j}$ have angles $\pi/2-\gamma_j$ and
$\pi/2+\gamma_j$, respectively. Hence the three-direction block contributes
\[
\begin{aligned}
    R_j^{(3)}
    &=\mu_{0,j}
      +\mu_{+,j}e^{4i(\pi/2-\gamma_j)}
      +\mu_{-,j}e^{4i(\pi/2+\gamma_j)}\\
    &=\frac{\zeta_j^2-\varepsilon_j^2+\cos(4\gamma_j)}{2\zeta_j^2}\\
    &=1-4\varepsilon_j^2.
\end{aligned}
\]
For the last equality we used
$\cos(4\gamma_j)=1-8\varepsilon_j^2\zeta_j^2$ and
$\varepsilon_j^2+\zeta_j^2=1$.
Since $0<\varepsilon_j<1/2$, we have
$0<R_j^{(3)}<1$, and $R_j^{(3)}\to1$. If $m=3$, then
$R_j=R_j^{(3)}$. If $m>3$, the remaining orthogonal pairs contribute terms
$\eta_\sigma e^{4i\varphi_{\sigma,j}}$ with
$\varphi_{\sigma,j}\to0$, so
\[
    R_j=(1-\eta)R_j^{(3)}+
    \sum_{\sigma=1}^{r-1}\eta_\sigma e^{4i\varphi_{\sigma,j}}\longrightarrow1.
\]
The same argument gives $|R_j|<1$ in this case as well. In the ungrouped
definition of $R_j$, it is a convex combination of unit complex numbers with
positive coefficients. The three-direction block already contains the factors
$1$, $e^{-4i\gamma_j}$, and $e^{4i\gamma_j}$, which are not all equal because
$0<\gamma_j<\pi/6$. Thus equality in the triangle inequality is impossible.

In either parity, $R_j\to1$ and $|R_j|<1$. After discarding finitely many terms
and relabeling the sequence, we may also assume $|R_j|>0$ for every $j$.
Therefore $S+\lambda_2=3-|R_j|$ is strictly larger than $2$ and tends to $2$.
The positive definiteness of the moment identity in \eqref{eq:prop33-conditions} implies that
the directions $u_{\alpha,j}$ span $\R^2$. Hence the integer frequencies
$v_{\alpha,j}$ generate a rank-two frequency lattice.
Proposition~\ref{prop:embedded-quotient} therefore applies and shows that each map factors
through an embedded flat minimal torus on its maximal frequency quotient. Passing
to this quotient does not change the image or any local invariant.
\end{proof}

\begin{proof}[\textbf{Proof of Theorem~\ref{thm:main}}]
Fix an odd codimension $q=2m-3\ge3$, a target value $c\in(2,3)$, and
$\delta>0$. Put
\[
    t=3-c\in(0,1),
    \qquad
    \delta_0=\min\left\{\delta,\frac t2,\frac{1-t}{2}\right\}>0.
\]
It is enough to construct rational unit directions
$u_\alpha=(\cos\theta_\alpha,\sin\theta_\alpha)$, pairwise distinct up to sign,
and positive weights satisfying
\[
    \sum_\alpha w_\alpha u_\alpha\otimes u_\alpha=\frac12I_2
\]
and
\[
    \bigl||R|-t\bigr|<\delta_0,
    \qquad
    R=\sum_\alpha w_\alpha e^{4i\theta_\alpha}.
\]
Indeed, taking the trace of the moment identity gives $\sum_\alpha w_\alpha=1$, because each $u_\alpha$ is a unit vector.
Lemma~\ref{lem:flat-construction-matrix} then yields
$S+\lambda_2=3-|R|$. The choice of $\delta_0$ will also ensure
$0<|R|<1$, so the realized value belongs to $(2,3)$.

Suppose first that $m=2r$ is even. Since $m\ge3$, we have $r\ge2$. Take
$p_1=(1,0)$. For a unit vector $p=(\cos\theta,\sin\theta)$,
\[
    \left|\frac{1+e^{4i\theta}}2\right|=|\cos2\theta|.
\]
As $\theta$ runs through $[0,\pi/4]$, this expression runs through $[0,1]$.
Rational points are dense on the unit circle, so we may choose a rational unit
vector $p_2=(\cos\theta_2,\sin\theta_2)$, distinct up to sign from both
$p_1$ and $p_1^\perp$, such that
\[
    \left|\left|\frac{1+e^{4i\theta_2}}2\right|-t\right|
    <\frac{\delta_0}{4}.
\]
Assign weight $1/4$ to each of
$p_1,p_1^\perp,p_2,p_2^\perp$. Since
$p\otimes p+p^\perp\otimes p^\perp=I_2$, these four directions satisfy the
moment condition, and their complex number is
\[
    R_0=\frac12\left(1+e^{4i\theta_2}\right).
\]

If $r=2$, set $\eta=0$, $E=0$, and $R=R_0$. If $r>2$, choose rational unit vectors
$p_3,\ldots,p_r$ so that all directions
$p_1,p_1^\perp,\ldots,p_r,p_r^\perp$ are pairwise distinct up to sign. Choose
positive rational numbers $\eta_3,\ldots,\eta_r$ with
\[
    \eta:=\sum_{\kappa=3}^r\eta_\kappa
    <\min\left\{\frac12,\frac{\delta_0}{8}\right\}.
\]
Multiply each of the four original weights by $1-\eta$ and assign weight
$\eta_\kappa/2$ to each member of the pair $p_\kappa,p_\kappa^\perp$. The
new moment is
\[
    (1-\eta)\frac12I_2+
    \sum_{\kappa=3}^r\frac{\eta_\kappa}{2}I_2
    =\frac12I_2.
\]
The new complex number has the form
\[
    R=(1-\eta)R_0+E,
    \qquad
    E=\sum_{\kappa=3}^r\eta_\kappa e^{4i\varphi_\kappa},
    \qquad |E|\le\eta,
\]
where $\varphi_\kappa$ is the angle of $p_\kappa$. Therefore
\[
\begin{aligned}
    \bigl||R|-t\bigr|
    &\le \bigl||R|-|R_0|\bigr|+\bigl||R_0|-t\bigr|\\
    &\le |R-R_0|+\frac{\delta_0}{4}\\
    &\le \eta|R_0|+|E|+\frac{\delta_0}{4}
    \le2\eta+\frac{\delta_0}{4}
    <\frac{\delta_0}{2}.
\end{aligned}
\]

Now suppose that $m=2r+1$ is odd. We begin with the symmetric block
\[
    u_0=(1,0),\qquad
    u_+=(\varepsilon,\zeta),\qquad
    u_-=(-\varepsilon,\zeta),
    \qquad \varepsilon^2+\zeta^2=1.
\]
The function $\varepsilon\mapsto1-4\varepsilon^2$ maps $(0,1/2)$ onto
$(0,1)$. Rational points are dense on the first-quadrant arc of
$\mathbb S^1$; for instance, the points
\[
    \left(\frac{2s}{1+s^2},\frac{1-s^2}{1+s^2}\right),
    \qquad s\in\mathbb Q\cap(0,1),
\]
are rational and dense on that arc. We may therefore choose
$\varepsilon,\zeta\in\mathbb Q$ with
\[
    0<\varepsilon<\frac12,
    \qquad
    \varepsilon^2+\zeta^2=1,
    \qquad
    |(1-4\varepsilon^2)-t|<\frac{\delta_0}{4}.
\]
Assign the weights
\[
    \mu_0=\frac{\zeta^2-\varepsilon^2}{2\zeta^2},
    \qquad
    \mu_+=\mu_-=\frac1{4\zeta^2}.
\]
Since
$\zeta^2-\varepsilon^2=1-2\varepsilon^2>0$, all three weights are positive,
and
\[
    \mu_0+\mu_++\mu_-
    =\frac{\zeta^2-\varepsilon^2+1}{2\zeta^2}=1.
\]
The off-diagonal entries in the weighted moment cancel, while
\[
    \mu_0+2\mu_+\varepsilon^2=\frac12,
    \qquad
    2\mu_+\zeta^2=\frac12.
\]
Thus this block has weighted moment $I_2/2$. Choose $\gamma\in(0,\pi/6)$ so that
$\varepsilon=\sin\gamma$ and $\zeta=\cos\gamma$. Then $u_+$ and $u_-$ have
angles $\pi/2-\gamma$ and $\pi/2+\gamma$, respectively, and the complex number
of the block is
\[
\begin{aligned}
    R_0
    &=\mu_0
      +\mu_+e^{4i(\pi/2-\gamma)}
      +\mu_-e^{4i(\pi/2+\gamma)}\\
    &=\frac{\zeta^2-\varepsilon^2+\cos4\gamma}{2\zeta^2}
      =1-4\varepsilon^2.
\end{aligned}
\]
Here the last equality follows from
$\cos4\gamma=1-8\varepsilon^2\zeta^2$ and
$\varepsilon^2+\zeta^2=1$.

If $m=3$, take only this block and set
\[
    \eta=0,
    \qquad E=0,
    \qquad R=R_0.
\]
If $m>3$, choose the remaining $r-1$ rational orthogonal pairs so that all
directions are pairwise distinct up to sign. Give these pairs positive rational total
weights $\eta_1,\ldots,\eta_{r-1}$ with
\[
    \eta:=\sum_{\sigma=1}^{r-1}\eta_\sigma
    <\min\left\{\frac12,\frac{\delta_0}{8}\right\},
\]
assign weight $\eta_\sigma/2$ to each member of the $\sigma$-th pair, and
multiply the three block weights by $1-\eta$. Since each orthogonal pair has
moment $I_2$, the new weighted moment is
\[
    (1-\eta)\frac12I_2
    +\sum_{\sigma=1}^{r-1}\frac{\eta_\sigma}{2}I_2
    =\frac12I_2.
\]
If $\psi_\sigma$ denotes the angle of the first vector in the $\sigma$-th
orthogonal pair, then that pair contributes
$\eta_\sigma e^{4i\psi_\sigma}$ to the complex number. Hence
\[
    R=(1-\eta)R_0+E,
    \qquad
    E=\sum_{\sigma=1}^{r-1}\eta_\sigma e^{4i\psi_\sigma},
    \qquad |E|\le\eta.
\]
In the case $m=3$, the same formulas hold with the sums omitted. In both
subcases, the moment is $I_2/2$, and, using $|R_0|\le1$, we obtain
\[
\begin{aligned}
    \bigl||R|-t\bigr|
    &\le |R-R_0|+\bigl||R_0|-t\bigr|\\
    &\le \eta|R_0|+|E|+\frac{\delta_0}{4}\\
    &\le2\eta+\frac{\delta_0}{4}
    <\frac{\delta_0}{2}.
\end{aligned}
\]

In both parity cases, all directions are rational and pairwise distinct up to
sign. After clearing a common denominator, Lemma~\ref{lem:general-eigenmap}
produces a linearly full flat minimal immersion on a compact finite-covering torus.
The moment identity is positive definite, so the directions, and hence the
integer frequencies, span $\R^2$; their integer span is therefore a rank-two
lattice. Proposition~\ref{prop:embedded-quotient} applies and descends the map to a
closed linearly full embedded flat minimal torus without changing the image or
any local invariant.
Moreover,
\[
    \bigl||R|-t\bigr|<\frac{\delta_0}{2}<\delta.
\]
Since $\delta_0\le t/2$ and $\delta_0\le(1-t)/2$, the same estimate gives
\[
    |R|>t-\frac{\delta_0}{2}\ge\frac{3t}{4}>0,
    \qquad
    |R|<t+\frac{\delta_0}{2}
    \le\frac{3t+1}{4}<1.
\]
Finally,
\[
    |(S+\lambda_2)-c|
    =|3-|R|-(3-t)|
    =\bigl||R|-t\bigr|<\delta.
\]
As $c\in(2,3)$ and $\delta>0$ were arbitrary, the realized constant values are
dense in $(2,3)$.
\end{proof}
\begin{proof}[\textbf{Proof of Theorem~\ref{thm:sequence}}]
Fix $m\ge3$ and put $q=2m-3$. Proposition~\ref{prop:full-odd} first
constructs linearly full flat minimal immersions on convenient finite-covering
tori. By Proposition~\ref{prop:embedded-quotient}, each of these maps factors
through its maximal frequency quotient as a flat minimal embedding
\[
    \widehat F_j:\widehat T_j^2\longrightarrow\Sph^{2m-1}=\Sph^{2+q}.
\]
The quotient map is a local isometry and the image is unchanged; hence linear
fullness and the local quantities $S,\lambda_1,\lambda_2$ and
$\rho_0^\perp$ are preserved. Lemma~\ref{lem:general-eigenmap} gives
$S\equiv2$, while Lemma~\ref{lem:flat-construction-matrix} gives
\[
    \lambda_1=1+|R_j|,
    \qquad
    \lambda_2=1-|R_j|,
    \qquad
    S+\lambda_2=3-|R_j|,
    \qquad
    \rho_0^\perp=4(1-|R_j|^2).
\]
The same proposition gives $0<|R_j|<1$ and $|R_j|\to1$, after passing to a tail
of the sequence if necessary. Hence $S+\lambda_2>2$ and
$S+\lambda_2\to2$. This proves Theorem~\ref{thm:sequence}.
\end{proof}
\begin{proof}[\textbf{Proof of Corollary~\ref{cor:all-q-nonfull}}]
For odd $q\ge3$ this follows from Theorem~\ref{thm:main}, which provides
linearly full embedded flat minimal tori. For an arbitrary $q\ge3$, start with
the embedded codimension-three flat-torus quotients
\[
    \widehat F_{N,\ell}:\widehat T_{N,\ell}^2\to\Sph^5
\]
from Theorem~\ref{thm:explicit-q3} and compose with a totally geodesic inclusion
$\Sph^5\subset\Sph^{2+q}$. The added normal directions have zero shape
operators, so Lu's fundamental matrix only acquires additional zero eigenvalues.
Thus $S$, $\lambda_1$, $\lambda_2$, and $S+\lambda_2$ are unchanged. The same
embedded flat-torus sequence therefore contradicts any positive gap constant
for closed embedded minimal surfaces, and hence also for closed immersed
minimal surfaces, in every codimension $q\ge3$.
\end{proof}

\begin{proof}[\textbf{Proof of Theorem~\ref{thm:genus-area}}]
Assume that
$
    S+\lambda_2\equiv c
$
for some constant $c$. Then
\[
    S=c-\lambda_2.
\]
By the Gauss equation for the minimal surface in the unit sphere, we have
\begin{equation}
\label{eq:K-lambda2-detailed}
    K=1-\frac S2=1-\frac{c-\lambda_2}{2}
    =\frac{\lambda_2-(c-2)}{2}.
\end{equation}
Integrating \eqref{eq:K-lambda2-detailed} over $M$ gives
\[
    \int_MK\,d\mu
    =\frac12\int_M\lambda_2\,d\mu
    -\frac{c-2}{2}\Area(M).
\]
By the Gauss--Bonnet formula, one has
\[
    \int_MK\,d\mu=2\pi\chi(M)=2\pi(2-2g)=4\pi(1-g).
\]
Substituting this into the preceding integral identity gives
\[
    c=2+\frac{8\pi(g-1)}{\Area(M)}
    +\frac{1}{\Area(M)}\int_M\lambda_2\,d\mu,
\]
which is \eqref{eq:gb-identity-intro}.
Assume now that $g\ge2$. Since $\lambda_2\ge0$ pointwise,
\eqref{eq:gb-identity-intro} immediately gives
\begin{equation}
\label{eq:c-lower-topological}
    c-2\ge \frac{8\pi(g-1)}{\Area(M)}.
\end{equation}
Set
\[
    d_g=\left\lfloor\frac{g+3}{2}\right\rfloor.
\]
We shall use the Yang--Yau inequality in the sharpened form
\begin{equation}
\label{eq:yang-yau-sharp-form}
    \lambda_1^\Delta(M)\Area(M)
    \le 8\pi d_g,
\end{equation}
for closed orientable Riemannian surfaces of genus $g$; see
\cite{YangYau1980,LiYau1982} and also the discussion in
\cite[Section~1.1]{Karpukhin2019}. We first consider the case $g>2$.
Karpukhin's strictness theorem \cite{Karpukhin2019} for Yang--Yau's bound in genus $g>2$
 improves \eqref{eq:yang-yau-sharp-form} to
\[
    \lambda_1^\Delta(M)\Area(M)<8\pi d_g.
\]
%Equivalently,
%\[
%    \frac{8\pi(g-1)}{\Area(M)}
%    >\frac{g-1}{d_g}\,\lambda_1^\Delta(M).
%\]
Combining this with \eqref{eq:c-lower-topological} yields
\[
    c>2+\frac{g-1}{\left\lfloor\frac{g+3}{2}\right\rfloor}
    \lambda_1^\Delta(M).
\]
It remains to treat the case $g=2$. Since $d_2=2$, the sharpened
Yang--Yau inequality \eqref{eq:yang-yau-sharp-form} gives
\[
    \lambda_1^\Delta(M)\Area(M)\le 16\pi.
\]
The constant $16\pi$ is sharp in genus two, as proved by Nayatani--Shoda
\cite{NayataniShoda2019}. %hence no strict inequality at the coefficient $1/2$ can be concluded from Yang--Yau's estimate alone. 
From the last inequality we
infer
\[
    \frac{8\pi}{\Area(M)}\ge \frac12\lambda_1^\Delta(M).
\]
Combining this with \eqref{eq:c-lower-topological}, and using $g=2$, we get
\[
    c\ge 2+\frac12\lambda_1^\Delta(M).
\]
\end{proof}

\section{Concluding remarks}

We conclude by recording several related remarks. We first recall a pinching
theorem and explain why the examples constructed above do not conflict with it.

\begin{GLZtheorem}[\cite{GLZ}]
Let $M^n$ be a closed minimal submanifold immersed in the unit sphere
$\Sph^{n+p}$. If
\[
    \lambda_1\le n,
    \qquad
    \rho^\perp\le
    \frac{1}{\sqrt2\,n(n-1)}\inf_{x\in M}(n-\lambda_1)(x),
\]
then $M^n$ must be one of the following:
\begin{enumerate}[label=\textup{(\roman*)},leftmargin=2em]
\item the great sphere with $S\equiv 0$;
\item the product
\[
    \prod_{i=1}^{r+1}\Sph^{n_i}
    \left(\sqrt{\frac{n_i}{n}}\right),
    \qquad
    \sum_{i=1}^{r+1}n_i=n,
    \qquad
    1\le r\le p.
\]
In the second case, $\rho^\perp=0$, $\lambda_1=n$, and $S=rn$.
\end{enumerate}
\end{GLZtheorem}

\begin{remark}
The flat tori constructed here do not satisfy the Ge--Li--Zhang pinching
hypothesis. For the explicit examples, put $\tau=\ell/N$. Then
\[
    n=2,
    \qquad
    2-\lambda_1=\tau^2,
    \qquad
    \rho^\perp=\tau\sqrt{2-\tau^2}.
\]
The pinching condition would require
\[
    \tau\sqrt{2-\tau^2}\le \frac{\tau^2}{2\sqrt2},
\]
which is impossible for $0<\tau<1$. Thus there is no conflict: although
$\lambda_1\le2$, the normal scalar curvature is too large relative to
$\frac{1}{2\sqrt2}(2-\lambda_1)$.
\end{remark}

\begin{remark}%[Relation with the Chern problem]
The examples do not affect the Chern problem for constant values of $S$. Along
the whole family one has $S\equiv2$. The varying quantity is
\[
    S+\lambda_2=2+\frac{\ell^2}{N^2}\to2.
\]
Thus the construction is a counterexample to a second gap for Lu's refined
quantity, not to discreteness of the possible values of $S$.
\end{remark}

\begin{remark}%[Self-shrinker rescalings]
Every minimal surface $F:M^2\to\Sph^{N-1}(1)$ gives a compact self-shrinker in
$\R^N$ after the standard rescaling $X=2F$. Indeed, Takahashi's equation implies
\[
    \mathbf H_X+\frac{X^\perp}{2}=0,
\]
where $\mathbf H_X$ denotes the trace mean-curvature vector of the Euclidean
immersion $X$. Thus the flat minimal tori constructed above also admit associated
compact flat self-shrinker rescalings. For these rescalings, $|A_X^\R|^2\equiv1$,
since the spherical examples have $S\equiv2$. We record this only as a simple
consequence of the spherical construction and do not use it in the proof of the
Lu-gap results.
\end{remark}

\begin{remark}%[Codimension two and even codimension]
The present method gives linearly full examples only in odd codimensions. The
even-codimensional examples mentioned above are obtained by placing the
codimension-three family in a larger totally geodesic sphere, and so they
are not linearly full in the larger ambient sphere. In particular, the
linearly full codimension-two case is not addressed by the present construction.
It has subsequently been settled in the positive direction by Ge--Li--Zhang
\cite{GeLiZhang26}, who proved that Lu's second-gap conjecture holds for all
closed minimal surfaces in $\Sph^4$.
\end{remark}
\begin{remark}%[Arithmetic nature of the construction]
In our construction, closedness is encoded by the frequency lattice: the
trigonometric coordinate functions descend to a compact flat torus precisely
because the frequency vectors lie in the corresponding dual lattice.  In the
explicit families constructed above, this is achieved through integer or
rational frequency data.  Thus the closed examples obtained here form an
arithmetic, hence countable, family, although the corresponding constant values
of \(S+\lambda_2\) are dense in intervals.  This should be distinguished from
the existence of a continuous moduli family of closed embedded minimal
submanifolds.
\end{remark}

\begin{question}
Apart from the flat-torus degenerations constructed in this paper, for which
classes of closed minimal submanifolds with constant $S+\lambda_2$ does Lu's
second-gap conclusion remain valid? In the surface case, natural test cases
include non-flat minimal surfaces in codimension $q\ge3$, linearly full examples
in even codimensions $q\ge4$, families satisfying normal-curvature pinching
conditions, and classes with an \emph{a priori} area bound. More generally, one may ask for geometric or
topological hypotheses that exclude the flat-torus escape described above and
restore a positive gap for $S+\lambda_2$ above the first-gap value.
\end{question}

\begin{acknow}
The authors are grateful to Professors Zhiqin Lu, Zizhou Tang, Jianquan Ge, and
Wenjiao Yan for their encouragement, support, and valuable discussions.
\end{acknow}

\end{document}